\definecolor{myred}{RGB}{251,154,133}
\definecolor{myblue}{RGB}{153,206,227}
\definecolor{mylightblue}{RGB}{0, 150, 255}
\definecolor{mygreen}{RGB}{32, 210, 64}
\definecolor{mygray}{RGB}{220, 220, 220}
\tikzset{snake it/.style={decorate, decoration=snake}}
\newtheorem{theorem}{Theorem}
\newtheorem{lemma}{Lemma}[section]
\newtheorem{corollary}[lemma]{Corollary}
\def\beq{ \begin{equation} }
	\def\eeq{ \end{equation} }
\def\square{\vcenter{\vbox{\hrule height .4pt
			\hbox{\vrule width .4pt height 5pt \kern 5pt
				\vrule width .4pt} \hrule height .4pt}}}
\newcommand{\BC}{{\mathbb{C}}}
\newcommand{\BH}{{\mathbb{H}}}
\newcommand{\BN}{{\mathbb{N}}}
\newcommand{\BR}{{\mathbb{R}}}
\newcommand{\ind}{{\mathbbm{1}}}
\newcommand{\prob}{{\bf P}}
\newcommand{\bae}{\begin{equation}\begin{aligned}}
		\newcommand{\eae}{\end{aligned}\end{equation}}
\newcommand{\ev}{\mathbf{E}}
\DeclareFontFamily{OML}{rsfs}{\skewchar\font'177}
\DeclareFontShape{OML}{rsfs}{m}{n}{ <5> <6> rsfs5 <7> <8> <9>
	rsfs7 <10> <10.95> <12> <14.4> <17.28> <20.74> <24.88> rsfs10 }{}
\DeclareMathAlphabet{\mathfs}{OML}{rsfs}{m}{n}
\newcommand{\ppp}{\mathfs{A}}
\newcommand{\ima}{\text{Im}}
\newcommand{\rea}{\text{Re}}
\newcommand{\slit}{\varphi}
\newcounter{relctr} 
\everydisplay\expandafter{\the\everydisplay\setcounter{relctr}{0}} 
\begin{document}
	
	\title{Logarithmic fluctuations of Stationary Hastings-Levitov}

\author{Noam Berger}
\address[Noam Berger]{Department of mathematics, TUM}
\urladdr{https://www.math.cit.tum.de/en/probability/people/berger/}
\email{noam.berger@tum.de }

\author{Eviatar B. Procaccia}
\address[Eviatar B. Procaccia]{Technion - Israel Institute of Technology}
\urladdr{http://procaccia.net.technion.ac.il}
\email{eviatarp@technion.ac.il}

\maketitle
\begin{abstract}
We prove that the fluctuation field $\{M_t(x)\}_{x\in\BR}$ of stationary Hastings-Levitov$(0)$ exhibits logarithmic spatial correlations. Moreover, by studying the infinitesimal generator of the imaginary part of $M_t(0)$, we show that for some $\beta>0$, $\max_{x\in[0,t]}\ima M_t(x)<\beta\log t$ with high probability, as $t\to\infty$.
\end{abstract}
	

\section{Introduction}
Stationary Hastings-Levitov$(0)$ (SHL$(0)$) was introduced and studied by the authors with Turner as a stationary off-lattice version of Diffusion Limited Aggregation defined as the composition of conformal slit maps from the upper half complex plane. Unlike the classical Hastings-Levitov process on the complement of the unit disk \cite{hastings1998laplacian, norris2012hastings, silvestri2017fluctuation}, where slit normalization is required to avoid blowup, in the half plane geometry un-normalized particle sizes remain tight. Using this simplicity, it is shown in \cite{berger2022growth} that the harmonic measure of an interval is a martingale that scales to $0$ like the distance of two brownian motions starting at the interval's end points. It follows that at time $t>0$, the typical trees in the aggregate reach a height of $\frac{\pi}{2}t$ and contain an order of $t^{3/2}$ particles. This matches a prediction by Meakin \cite{meakin1983diffusion} and was verified in computer simulations \cite{procaccia2021dimension} for DLA grown on the half plane (the so called Stationary DLA \cite{procaccia2019stationary,procaccia2020stationary,mu2022scaling}). Other stationary versions of aggregation processes where studied, and all share some universal geometric attributes such as finiteness of all trees a.s at time $\infty$ \cite{berger2014stretched,antunovic2017stationary}.

The SHL$(0)$ is defined by the following procedure: considering an intensity $1$ Poisson point process $\ppp$ on $\BR\times\BR_+$, and order $\ppp\cap([-m,m]\times \BR_+):=\{(x_1,t_1),(x_2,t_2),\ldots\}$ by the second coordinate (arrival times). Define for any $z\in\BH=\{z\in\BC:\ima z>0\}$,
\bae
\tilde{F}_t^m(z)=\left\{\begin{array}{ll}
0 & t<t_1\\
\slit_{x_1}\circ \slit_{x_2}\circ\cdots\circ \slit_{x_k}(z)& t_k\le t<t_{k+1} 
\end{array}\right.
,\eae
with the slit map $\slit_x(z)=x+\sqrt{(z-x)^2-1}$, using the branch of $\sqrt{\cdot}$ for which the imaginary part is positive.
The SHL$(0)$ is defined by taking the limit $\tilde{F}_t(z)=\lim_{m\to\infty}\tilde{F}_t^m(z)$. 
\begin{figure}[h!]
\centering
\includegraphics[ height=1.5in, width=1\textwidth]{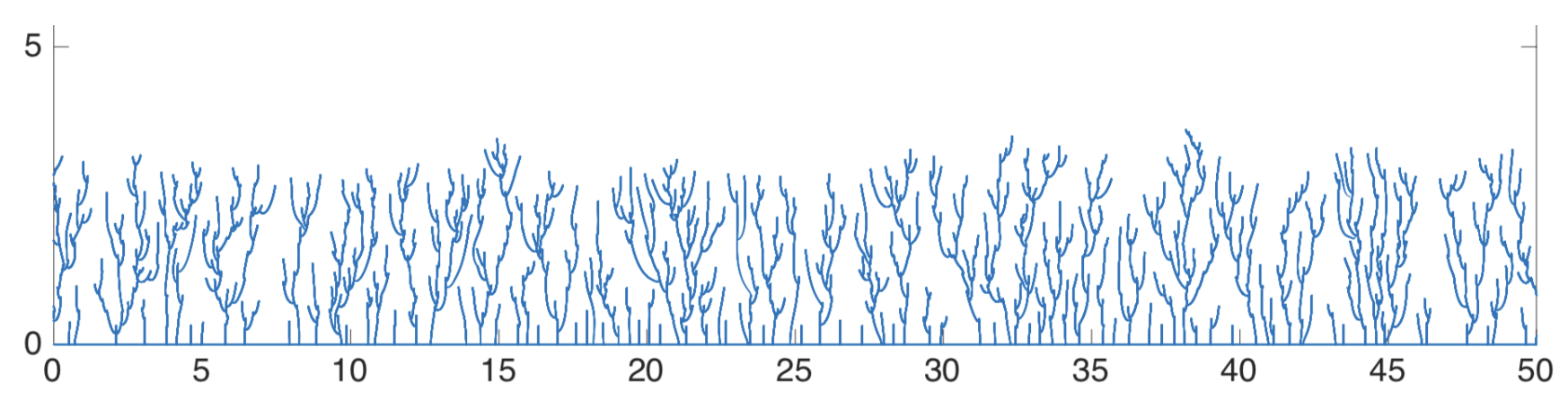}
\caption{Computer simulation of $\{\tilde{F}_t(x)\}_{x\in\BR}$ \label{fig:processsim}}
\end{figure}

Note that for any $t$, $\tilde{F}^m_t$ has the same distribution as 
\bae
{F}_t^m(z)=\left\{\begin{array}{ll}
0 & t<t_1\\
\slit_{x_k}\circ \slit_{x_{k-1}}\circ\cdots\circ \slit_{x_1}(z)& t_k\le t<t_{k+1} 
\end{array}\right.
,\eae	
and by taking the limit we obtain the backwards SHL$(0)$, $F_t(z)=\lim_{m\to\infty}F_t^m(z)$. It is shown in \cite{berger2022growth} that one can write 
\bae\label{eq:martingale}
F_t(z)=z+it\frac{\pi}{2}+M_t(z)
,\eae
where $M_t(z)$ is a zero mean martingale, which can be realized as the limit of the martingale part in the Doob decomposition 
\begin{equation}\label{eq:finitewindow}
F^m_t(z)=z+D^m_t(z)+M^m_t(z).
\end{equation} 

In this paper we study the field $\{M_t(x)\}_{x\in\BR}$ and prove it observes logarithmic spatial correlation and study the maximum of the field. 
\begin{figure}[h!]
\centering
\includegraphics[ height=1.5in, width=1\textwidth]{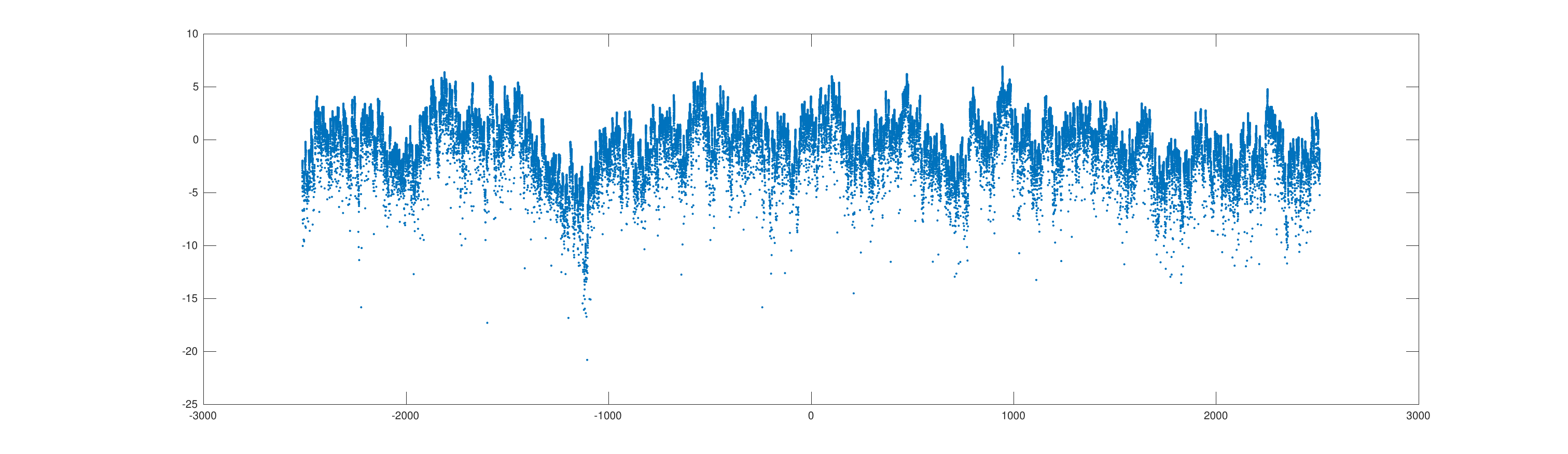}
\caption{Computer simulation of $\{M_t(x)\}_{x\in\BR}$ \label{fig:flucsim}}
\end{figure}

Logarithmic correlation for the small particle limit of the original Hastings-Levitov $(0)$ process were discovered by Silvestri in \cite{silvestri2017fluctuation}. We propose a distinct approach compared to Silvestri’s, focusing on the physical process rather than its scaling limit. Specifically, we demonstrate the logarithmic correlations of fluctuations for the SHL(0) process itself. Note that by computer simulations (see Figure \ref{fig:flucsim}), without considering the scaling limit, it doesn't appear that the field $\{M_t(x)\}_{x\in\BR}$ is Gaussian. See Figure \ref{fig:hist} for a simulated histogram of the marginal distribution, note the lack of symmetry. 
\begin{figure}[h!]
\includegraphics[width=0.5\linewidth]{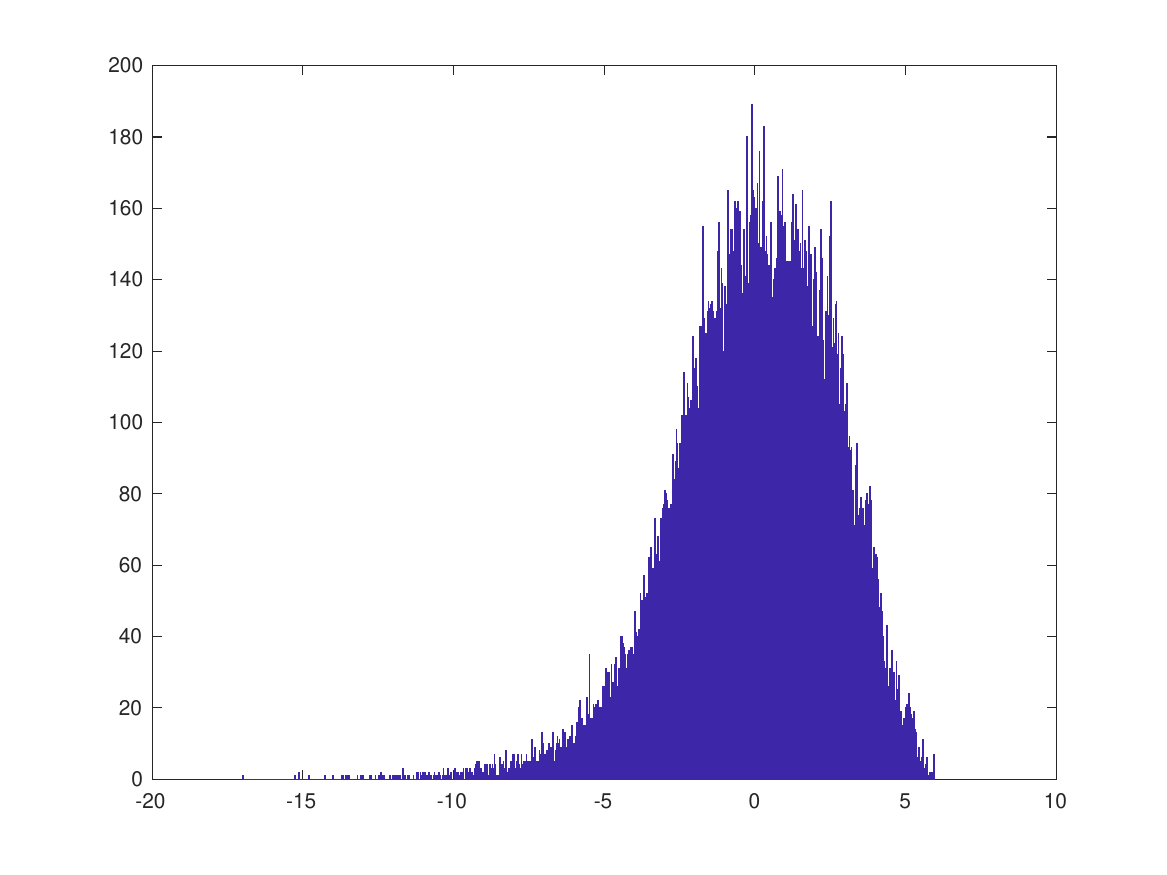} 
\caption{$M_t(0)$ empirical distribution histogram \label{fig:hist}}
\end{figure}
\subsection{Results}
First we improve the diffusive bound proved in \cite{berger2022growth}.
\begin{theorem}\label{thm:var} As $t\to\infty$,
$\ev[|M_t(0)|^2]=  \frac{\pi}{4}\log t(1+o(1)).$
\end{theorem}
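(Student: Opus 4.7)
The plan is to reduce the variance to a one-arrival integral using the quadratic-variation structure of the pure-jump martingale $M_t(0)$. Since $F_t(0)$ is a Markov jump process driven by $\ppp$, with generator
\[
\CL f(w) \;=\; \int_\BR \bigl(f(\slit_x(w))-f(w)\bigr)\,dx,
\]
the It\^o isometry for compensated complex Poisson integrals (equivalently, Dynkin's formula applied to $f(w)=|w|^2$ combined with the martingale decomposition \eqref{eq:martingale}) gives the exact identity
\[
\ev\bigl[|M_t(0)|^2\bigr] \;=\; \ev\!\left[\int_0^t I(F_s(0))\,ds\right], \qquad I(w) := \int_\BR |\slit_x(w)-w|^2\,dx.
\]
I would derive this first for the finite-window process $F_t^m$ of \eqref{eq:finitewindow}, where it is standard compensator bookkeeping for a compound Poisson process, and then pass to the limit $m\to\infty$ using the moment control from \cite{berger2022growth}.

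The next step is to analyse $I(w)$ as $\ima w\to\infty$. From the Laurent expansion $\slit_x(w)-w = -\tfrac{1}{2(w-x)}+O(|w-x|^{-3})$ one obtains $|\slit_x(w)-w|^2 = \tfrac{1}{4|w-x|^2}+O(|w-x|^{-4})$, and combining with the Poisson-kernel identity $\int_\BR dx/|w-x|^2 = \pi/\ima w$ gives $I(w) = c/\ima w + O((\ima w)^{-3})$ for an explicit constant $c$. A cleaner, exact representation comes from the Joukowski substitution $w-x=\tfrac{1}{2}(\omega+\omega^{-1})$ (valid on $\{|\omega|>1,\ \ima\omega>0\}$): under it $|\slit_x(w)-w|^2=|\omega|^{-2}$ and $I(w)$ becomes a line integral along the conformal image of the horizontal line $\{\ima\zeta = \ima w\}$, which is convenient for pinning down the leading constant.

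Finally, substituting the drift decomposition $\ima F_s(0) = s\pi/2+\ima M_s(0)$, the diffusive a priori bound from \cite{berger2022growth} yields $\ima M_s(0) = o(s)$ with high probability, and since every slit strictly increases the imaginary part the map $s \mapsto \ima F_s(0)$ is pathwise non-decreasing; together these force $\ima F_s(0) \sim s\pi/2$. Plugging this into $\ev[I(F_s(0))]$ and integrating in $s\in[s_0,t]$ (the contribution from $s<s_0$ being uniformly bounded and absorbed into the $o(\log t)$ error, since $I(w)$ is bounded on $\{\ima w \le v_0\}$) produces the claimed $\tfrac{\pi}{4}\log t(1+o(1))$ asymptotic once the constants are matched. \textbf{The main obstacle} I anticipate is the self-consistent control of $\ev[1/\ima F_s(0)]$: Jensen provides one direction of the leading-order estimate, but the matching reverse inequality requires quantitative concentration of $\ima F_s(0)$ about its mean, which is essentially the refinement the theorem itself is establishing. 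A bootstrap starting from the $O(\sqrt{s})$ diffusive bound of \cite{berger2022growth} and iterating until both bounds are mutually consistent should close the self-referential loop.
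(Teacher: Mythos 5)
Your proposal follows essentially the same route as the paper's proof: the same quadratic-variation identity $\ev\big[|M_t(0)|^2\big]=\ev\big[\int_0^t\int_{\BR}|\slit_x(F_s(0))-F_s(0)|^2\,dx\,ds\big]$, the same expansion $\int_{\BR}|\slit_x(iy)-iy|^2\,dx=\tfrac{\pi}{4y}+O(y^{-3})$ (the paper's appendix lemma), and the same bootstrap in which a crude a priori lower bound on $\ima F_s(0)$ is upgraded to two-sided concentration around $\tfrac{\pi}{2}s$ and fed back into the identity (the paper seeds the iteration with a block/Hoeffding linear-growth estimate, Lemma \ref{lem:llnrate}, rather than the diffusive bound, but the loop is the same). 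The obstacle you single out, quantitative control of $\ev[1/\ima F_s(0)]$ in both directions, is precisely what the paper's events $H_1$, $H_2$ and $H_4$ are designed to handle, so your plan matches the published argument.
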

Next we claim that $\{M_t(x)\}_x$ admits logarithmic correlations.	
\begin{theorem}\label{thm:cov}
$$\mathrm{Cov}(M_t(0),M_t(b))=\frac{\pi}{4}\min\{\log  (t)-\log  (b), 0\}(1+o(1)),$$
where the little $o$ notation is with respect to $b\to\infty$.
\end{theorem}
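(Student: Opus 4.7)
The plan is to compute the covariance through the predictable cross-bracket of the complex pure-jump martingales $M_\cdot(0)$ and $\overline{M_\cdot(b)}$, following the template of the proof of Theorem~\ref{thm:var}. Applying the Doob–Meyer compensation formula to the finite-window decomposition \eqref{eq:finitewindow} and passing to the limit $m\to\infty$, one obtains
\begin{equation*}
\ev\!\left[M_t(0)\overline{M_t(b)}\right]=\ev\!\int_0^t\!\int_\BR\bigl(\slit_x(F_{s-}(0))-F_{s-}(0)\bigr)\,\overline{\bigl(\slit_x(F_{s-}(b))-F_{s-}(b)\bigr)}\,dx\,ds,
\end{equation*}
and the covariance of interest is the real part of this quantity.

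The core computation proceeds in two steps. First, using the expansion $\slit_x(w)-w=-\tfrac{1}{2(w-x)}+O(|w-x|^{-3})$ valid for $|w-x|$ large, the leading-order inner integrand becomes $\tfrac{1}{4(F_{s-}(0)-x)\overline{(F_{s-}(b)-x)}}$. The $x$-integral is then evaluated in closed form by partial fractions: for $w_1,w_2\in\BH$,
\begin{equation*}
\int_\BR\frac{dx}{(w_1-x)(\overline{w_2}-x)}=\frac{2\pi i}{w_1-\overline{w_2}}.
\end{equation*}
Second, replace $F_{s-}(z)$ by its deterministic mean $z+is\pi/2$, so that $F_{s-}(0)-\overline{F_{s-}(b)}\approx -b+is\pi$. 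Taking real part and integrating in $s$ reduces the covariance to an explicit integral of the form $\int_0^t\frac{s\,ds}{b^2+s^2\pi^2}\sim\log\bigl(1+t^2\pi^2/b^2\bigr)$, which grows like $\log(t/b)$ when $t\gg b$ and vanishes as $(t/b)^2$ when $t\ll b$ — precisely the logarithmic crossover in the statement.

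The main technical difficulty lies in justifying the replacement of $F_{s-}(z)$ by its mean inside the reciprocal $1/(F_{s-}(0)-\overline{F_{s-}(b)})$, especially in the crossover regime $s\pi\sim b$. I expect to address this by a truncation: on the event $\{|M_s(z)|\le(\log s)^{2/3}\}$, which holds with probability $1-o(1)$ by Theorem~\ref{thm:var} and Chebyshev, a first-order Taylor expansion of the reciprocal around the mean yields a pointwise error of order $(\log s)^{2/3}/(b^2+s^2\pi^2)$, whose $s$-integral is $o(\log(t/b))$; the complementary event is controlled by the $L^2$ bound on the martingale jumps from Theorem~\ref{thm:var}. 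Finally, the sub-leading $O(|w-x|^{-3})$ corrections from the slit-map expansion integrate to $O(1)$ in total and are negligible against the $\log$.
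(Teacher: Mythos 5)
Your route is genuinely different from the paper's. The paper never writes down a cross-bracket: for $b$ much larger than $t$ it proves approximate independence by comparing $M_t$ with the finite-window martingales of \eqref{eq:finitewindow} (Lemma \ref{eq:approxwindow} and Lemma \ref{lem:cov_far}), and for $t>b$ it splits both martingales into increments over $[0,b/\log^3 b]$, $[b/\log^3 b,b\log^3 b]$ and $[b\log^3 b,t]$, kills the cross terms by orthogonality of martingale increments, Cauchy--Schwarz and Theorem \ref{thm:var}, and identifies the main term with the variance accumulated on the last interval, using \eqref{eq:derivativeuse} to show that $F_s(0)$ and $F_s(b)$ move essentially in parallel once $s\ge b\log^3 b$. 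Your proposal instead computes $\ev[M_t(0)\overline{M_t(b)}]$ directly from the bilinear compensation formula, evaluates the $x$-integral by residues, and substitutes the law of large numbers for $F_s$; this is a legitimate and arguably more unified strategy (it produces the whole crossover $\log(1+\pi^2t^2/b^2)$ in one formula, rather than two regimes glued along $s\approx b$ up to polylog factors, and it matches the intended reading of the statement with $\min$ replaced by $\max$), and the concentration input it needs is exactly what the paper supplies (Theorem \ref{thm:var}, Lemmas \ref{lem:llnrate} and \ref{lem:bootstrapfluctuationrate}).

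There are, however, genuine gaps. First, the constant: the theorem asserts the prefactor $\tfrac{\pi}{4}$, and your sketch never tracks it. If you do, your leading integrand is $\rea\frac{\pi i}{2(F_s(0)-\overline{F_s(b)})}\approx\frac{\pi^2 s}{2(b^2+\pi^2 s^2)}$, whose time integral is $\tfrac14\log(1+\pi^2t^2/b^2)=\tfrac12(\log t-\log b)+O(1)$, i.e.\ your method delivers the constant $\tfrac12$, not $\tfrac{\pi}{4}$; you must either find the missing factor or confront the discrepancy explicitly (note the same tension already appears in the paper's variance argument: on the concentration events $\ima F_s(0)=\tfrac{\pi}{2}s(1+o(1))$, so the rate $\tfrac{\pi}{4\ima F_s(0)}$ of Lemma \ref{lem:slit_int_bound} is $\tfrac1{2s}(1+o(1))$). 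A proof that leaves the prefactor as ``an integral of the form'' does not prove the stated equality. Second, the complementary event is handled too loosely: on $\{|M_s(0)|\vee|M_s(b)|>(\log s)^{2/3}\}$ the cross-rate is only $O(1)$ pointwise, and $\int_0^t\prob(\cdot)\,ds\asymp t(\log t)^{-1/3}$ swamps the logarithm; you must intersect with the LLN event of Lemma \ref{lem:bootstrapfluctuationrate} so that the cross-rate is $O(1/s)$ outside a superpolynomially small event, after which the residual error is of order $\int_2^t s^{-1}(\log s)^{-1/3}ds\asymp(\log t)^{2/3}$. That is $o(\log(t/b))$ only when $\log(t/b)$ is comparable to $\log t$, so --- like the paper's own $O(\sqrt{\log b\,\log\log b})$ error --- it does not give the multiplicative $(1+o(1))$ uniformly over all $t>b$; state precisely in which regime your asymptotics holds. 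Finally, ``the covariance of interest is the real part'' presupposes the Hermitian convention $\ev[M_t(0)\overline{M_t(b)}]$: with the unconjugated product both poles of the $x$-integrand lie in $\BH$ and the leading term vanishes, so the convention must be made explicit and matched to what the paper means by $\mathrm{Cov}$.
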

	
Lastly for the behavior maximum of the field, let
\bae
\mathfs{M}_t=\max_{x\in[0,t]}\ima M_t(x).
\eae
\begin{theorem}\label{thm:max_fluc}
There exists a $c>0$ such that for any $\beta>0$ large enough, for every $t>0$ large enough,
\bae
\prob\left(\max_{x\in[0,t]}\ima M_t(x)>\beta \log t\right)<\frac{c}{t^{1/2}}.
\eae
\end{theorem}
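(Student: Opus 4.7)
The plan is to control $\prob(\ima M_t(x)>\beta\log t)$ at each fixed $x$ via exponential Markov, using sharp bounds on $\ev[e^{\lambda\ima M_t(x)}]$ obtained from the infinitesimal generator of the process, and then to pass to the maximum over $x\in[0,t]$ by a union bound combined with a continuity estimate. Fix $x\in[0,t]$ and set $Y_s=\ima M_s(x)$, $W_s=F_s(x)$. Writing $g(x',w)=\ima(\slit_{x'}(w)-w)$, the Poisson random measure $\ppp$ induces the following generator acting on $\psi(y,w)=e^{\lambda y}$:
\[
(\mathcal{L}\psi)(y,w)=e^{\lambda y}\int_{\BR}\bigl(e^{\lambda g(x',w)}-1-\lambda g(x',w)\bigr)\,dx',
\]
the compensating $-\lambda g$ absorbing the deterministic drift $\pi/2$ subtracted in passing from $\ima F$ to $\ima M$. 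The expansion $\slit_{x'}(w)-w=-\frac{1}{2(w-x')}+O(|w-x'|^{-3})$ gives $g(x',w)=\frac{\ima w}{2|w-x'|^{2}}+O(|w-x'|^{-4})$, hence $\int_{\BR}g(x',w)^{2}\,dx'=\frac{\pi}{8\ima w}(1+o(1))$ as $\ima w\to\infty$; a direct computation near $x'=\rea w$ shows $|g|\le C$ uniformly. Combining these yields the core estimate
\[
\Phi(\lambda,w):=\int_{\BR}\bigl(e^{\lambda g(x',w)}-1-\lambda g(x',w)\bigr)\,dx'\ \le\ \frac{C\lambda^{2}}{\max(1,\ima w)},\qquad |\lambda|\le 1.
\]

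To convert this into an exponential moment bound, observe that $N_s:=\exp\bigl(\lambda Y_s-\int_0^s\Phi(\lambda,F_r(x))\,dr\bigr)$ is a local supermartingale by a Dol\'eans--Dade type identity using the generator, so $\ev[e^{\lambda Y_t(x)}]\le \ev[\exp(\int_0^t\Phi(\lambda,F_r(x))\,dr)]$. Splitting the time integral at some fixed $s_0\ge 1$ disposes of the short-time part by the uniform bound $\Phi\le C\lambda^{2}$; for $r\ge s_0$ one writes $\int\Phi\,dr\le C\lambda^{2}\int_{s_0}^t dr/\ima F_r(x)$ and uses concentration of $\ima F_r(x)=r\pi/2+\ima M_r(x)$ around $r\pi/2$. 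A stopping-time argument with $\tau:=\inf\{r\ge s_0:\ima F_r(x)\le r\pi/4\}$, combined with Theorem~\ref{thm:var} and Doob's maximal inequality applied dyadically to $\{\ima M_r(x)\}_r$ (using that the jumps $g$ are uniformly bounded, so Azuma-type concentration applies), bounds $\prob(\tau<t)=O(t^{-\alpha_0})$ for some $\alpha_0\ge 1/2$. On $\{\tau\ge t\}$, $\int_0^t\Phi\,dr\le C_1\lambda^{2}\log t$, giving $\ev[e^{\lambda Y_t(x)}\ind_{\{\tau\ge t\}}]\le t^{C_1\lambda^{2}}$, and exponential Markov with $\lambda=\beta/(2C_1)$ yields
\[
\prob(Y_t(x)>\beta\log t)\le t^{-\beta^{2}/(4C_1)}+O(t^{-\alpha_0}).
\]

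Discretize $[0,t]$ at unit spacing and union bound: choosing $\beta$ large enough that $\beta^{2}/(4C_1)\ge 3/2$ gives $\prob(\max_{k\in\BZ\cap[0,t]}Y_t(k)>\beta\log t)\le Ct^{-1/2}$. To reach the continuum supremum, exploit analyticity of $F_t$ in $\BH$ and a Cauchy-type derivative bound on $\partial_x\ima F_t(x+i\eta)$ at height $\eta\asymp 1$, combined with the covariance structure of Theorem~\ref{thm:cov}, to show that the oscillation of $\ima M_t$ on any unit interval is $O(1)$ with probability $1-O(t^{-1/2})$; a finer discretization at scale $1/t$ together with a correspondingly larger $\beta$ is an alternative if cleaner.

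The main obstacle will be establishing the stopping-time bound $\prob(\tau<t)=O(t^{-1/2})$ at the correct rate. The pointwise Chebyshev bound $\prob(\ima M_r(x)<-r\pi/4)\lesssim \log r/r^{2}$ obtained from Theorem~\ref{thm:var} is borderline once summed dyadically over scales, and a genuine maximal inequality, leveraging the uniform boundedness of the martingale increments $g$ and hence Azuma-type concentration, seems necessary to upgrade the per-scale bound to a uniform-in-$r$ statement. A secondary challenge is the continuity reduction in $x$: the logarithmic correlation structure suggests a rough modulus of continuity for $\ima M_t(\cdot)$, so a dyadic chaining argument driven by the generator estimate at each scale may be needed rather than any naive derivative bound.
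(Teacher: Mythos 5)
Your first half (generator estimate, compensated exponential supermartingale, Chernoff, then a union bound over a polynomial grid) is essentially the paper's route to its pointwise tail bound: the paper bounds the generator of $Y_s=\ima F_s(z)$ acting on $e^{\alpha y}$, gets $\ev[e^{\alpha\ima M_t(z)}]\le e^{\frac{\pi}{2}\alpha^2e^{\alpha}}t^{\alpha^2}$ via Gr\"onwall, and deduces $\prob(\ima M_t(z)>\beta\log t)\le e^{\frac{\pi\beta^2}{8}e^{\beta/2}}t^{-\beta^2/4}$, which it then sums over $t^{1+\gamma}$ grid points. Your worry about the stopping-time bound $\prob(\tau<t)$ is not the real obstruction: a law-of-large-numbers rate with exponential error (the paper's Lemma \ref{lem:llnrate}, or monotonicity of $s\mapsto\ima F_s(x)$ under composition of slit maps) handles the event $\{\ima F_r(x)\le r\pi/4 \text{ for some } r\}$ comfortably.

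The genuine gap is the passage from the grid to $\max_{x\in[0,t]}$, and the tools you propose there would fail. The maximum is taken over \emph{boundary} points, where $F_t$ parametrizes the cluster boundary: $x\mapsto\ima F_t(x)$ oscillates on arbitrarily small harmonic-measure scales (preimages of tree tips and fjords shrink far below any fixed polynomial scale), so its oscillation on unit intervals is not $O(1)$ with high probability, and no refinement of the grid to scale $1/t$ repairs this. Moreover, $F_t'$ blows up at the preimages of slit bases, so there are no useful moment bounds for the derivative at height $\eta\asymp 1$ (the paper only has $\ev[|F_t'(i\log t)|^2]\le\xi$, i.e.\ at height $\log t$), and the covariance structure of Theorem \ref{thm:cov} gives only second moments, which cannot survive a union bound over $\sim t$ intervals at threshold $\beta\log t$. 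The paper's proof contains two ideas absent from your plan: (i) it proves the uniform bound on the \emph{lifted} line $i\log t+[-t,2t]$, where the derivative moment bound plus the Koebe-type distortion theorem lets grid values control the whole line (both $\ima M_t$ and $\rea M_t$ there); (ii) it then uses a planarity/enclosure argument — the boundary paths $\nu^{2t}_{F_{2t}(-\frac12 t)}$, $\nu^{2t}_{F_{2t}(\frac32 t)}$ from \cite{berger2022growth} together with $F_t(i\log t+[-t,2t])$ enclose $F_t([0,t])$ — so that the maximum of $\ima F_t$ on the real segment is dominated by the maximum over the lifted line plus the side paths, with exponentially small exceptional probability. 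Without some substitute for this topological step, your argument only bounds the field at grid points (or on a line at macroscopic height), not $\max_{x\in[0,t]}\ima M_t(x)$ itself.
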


	
\section{Proof  of Theorem \ref{thm:var}}
First we improve the law of large numbers estimate from \cite{berger2022growth}.
\begin{lemma}\label{lem:llnrate}
There exist  $\xi_{\ref{lem:llnrate}}, c_{\ref{lem:llnrate}}>0$ such that for every $z\in\BH$,
$$\prob\left[\ima F_t(z)<\ima z+\xi_{\ref{lem:llnrate}}t\right]<e^{-c_{\ref{lem:llnrate}}t}.$$
\end{lemma}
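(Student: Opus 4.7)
The plan is to exploit the monotonicity $\ima \slit_x(w) \geq \ima w$ of each slit map, which forces $Y_t := \ima F_t(z)$ to be non-decreasing in $t$, and combine it with a Paley--Zygmund estimate on unit-time increments together with the Markov property. In each integer interval $[k,k+1]$ the imaginary part then grows by at least a fixed constant with a uniform positive probability, and a Chernoff bound on the number of such ``good'' intervals yields the exponential decay.

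I would first verify the monotonicity by a direct computation on $\ima \sqrt{(w-x)^2-1}$, so that composing slit maps gives $Y_t$ non-decreasing almost surely. From the martingale decomposition \eqref{eq:martingale} one has $\ev[Y_1 - Y_0] = \pi/2$; for the second moment, each single-slit jump $\ima \slit_x(w) - \ima w$ is bounded by $1$ and the predictable quadratic variation density obeys the pointwise bound
\[
\int_\BR (\ima \slit_x(w) - \ima w)^2\, dx \;\leq\; \int_\BR (\ima \slit_x(w) - \ima w)\, dx \;=\; \pi/2,
\]
giving a uniform bound $\ev[(Y_1 - Y_0)^2] \leq K$ independent of $z$. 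Paley--Zygmund applied to the non-negative variable $Y_1 - Y_0$ with mean $\pi/2$ then produces $p_0 > 0$ with $\prob[Y_1 - Y_0 \geq \pi/4] \geq p_0$ uniformly in $z \in \BH$. By spatial translation invariance of $\ppp$ and the Markov property of $F_t$, the same lower bound holds conditionally, $\prob[Y_{k+1} - Y_k \geq \pi/4 \mid \CF_k] \geq p_0$, so the number of ``good'' integer intervals in $[0, \lfloor t \rfloor]$ stochastically dominates a $\mathrm{Binomial}(\lfloor t \rfloor, p_0)$ variable. A Chernoff bound on this binomial, combined with the monotonicity of $Y_t$ (allowing the unit-interval increments to sum to at most $Y_t - Y_0$), concludes the lemma with $\xi_{\ref{lem:llnrate}} = p_0\pi/16$ and an explicit $c_{\ref{lem:llnrate}} > 0$.

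The main obstacle is the uniform-in-$z$ second-moment bound. The ``effective window'' of slits contributing to $Y_1 - Y_0$ scales like $\ima z$ while each individual contribution scales like $1/\ima z$; these effects must compensate to produce a $z$-independent variance, which is precisely what the pointwise inequality $(\ima \slit_x(w) - \ima w)^2 \leq \ima \slit_x(w) - \ima w$ (a consequence of the jump-size bound by $1$) enforces. Alternatively, the required uniform $L^2$ bound on unit-time increments can be extracted from the diffusive estimate proven in \cite{berger2022growth}.
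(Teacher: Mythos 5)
Your argument is correct and follows essentially the same route as the paper: both chop time into i.i.d.\ blocks, show each block raises $\ima F$ by a macroscopic amount with probability bounded below uniformly in the starting point, and conclude with a Hoeffding/Chernoff bound on the number of good blocks, relying on monotonicity of the imaginary part so that bad blocks cannot hurt. The only real difference is how the per-block estimate is obtained: you use unit blocks and Paley--Zygmund from the exact drift $\pi/2$ together with a uniform second-moment bound, while the paper takes blocks of large length $m$ and applies Chebyshev to the martingale part, whose variance is of order $m/(1+\ima z)$.
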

\begin{proof}
W.l.o.g we assume that $\frac{t}{m}\in\BN$, for some $m$ that will be chosen later, and we write
$$
F_t(z)=G_{t/m}\circ G_{t/m-1}\circ \cdots\circ G_1(z)
,$$
where the $G_i$ are i.i.d. and distributed as $F_m$. By \cite[Lemma 5.1]{berger2022growth} we obtain that there are $M_m^i$ i.i.d with mean zero such that $G_i(z)=z+i\frac{m\pi}{2}+M_m^i$. Moreover, 
\begin{equation}\label{eq:growthspeed}
\prob\left[\ima G_1(z)>\ima z +\frac{m\pi}{4}\right]\ge 1-\prob\left[|M_1(z)|>\frac{m\pi}{4}\right]\ge1-\frac{c}{(1+\ima z)m}
.\end{equation}
Thus, for $m$ large enough, there is a $\xi>0$ such that for any $z\in \BH$,
\bae
\prob\left[\ima G_1(z)>\ima z +\frac{m\pi}{4}\right]\ge \xi
.\eae
Since $G_i$ are independent, we obtain that
\bae
\prob\left[\ima M_i(G_{i-1}\circ G_{i-2}\circ \cdots\circ G_1(z))>-\frac{m\pi}{4}\right]\ge \xi
.\eae

Denote $N_t=|\{i\le t/m:\ima M_i(G_{i-1}\circ G_{i-2}\circ \cdots\circ G_1(z))>-\frac{m\pi}{4}\}|$. By Hoeffding's inequality there is a $\zeta>0$ such that, 
\bae
\prob\left(N_t<\frac{\xi}{2}t/m\right)<\zeta^{t/m}
.\eae
Under the event $\left\{N_t\ge \frac{\xi}{2}t/m\right\}$, we obtain $F_t(z)\ge \frac{\pi}{4}\frac{\xi}{2}t$. By choosing $\xi_{\ref{lem:llnrate}}=\frac{\pi}{2}\frac{\xi}{2}$ we obtain for some $c_{\ref{lem:llnrate}}>0$ that
$$\prob\left[\ima F_t(z)<\ima z+\xi_{\ref{lem:llnrate}}t\right]<e^{-c_{\ref{lem:llnrate}}t}.$$
\end{proof}
With Lemma \ref{lem:llnrate}, we are ready to prove Theorem \ref{thm:var}. The proof employs a bootstrapping approach, iteratively refining fluctuation bounds and incorporating them back into the martingale representation.
\begin{proof}[Proof  of Theorem \ref{thm:var}] We first condition on the likely event $$H_1(\eta)=\left\{\forall 		s>\eta: \ima F_s(0)>\frac{s}{\log(s^\alpha)}\right\},$$ then bound the fluctuations, in order to guarantee that the event 
$$H_2(s)=\left\{\ima F_s(0)>\pi s/2 -s^{2/3}\right\}$$ is highly likely.	
By Lemma \ref{lem:llnrate}, there exists a $c>0$ such that for $\alpha$ large enough, that will be chosen later, $$\prob[H_1(\eta)]>1-\frac{c}{\eta^{c\alpha-1}}.$$
By \eqref{eq:martingale} and  \cite[Lemma 3.3]{berger2022growth} we write,
\bae
\ev\left[|M_t(0)|^2\right]&=\ev\left[\int_0^t\int_{-\infty}^\infty  \Big|\slit_x(F_s(0))-F_s(0)\Big|^2dxds\right]\le \ev\left[\int_0^t  \frac{c}{1+\ima F_s(0)}ds  \right]\\
&=\ev\left[\int_0^\eta  \frac{c}{1+\ima F_s(0)}ds  \right]+\ev\left[\int_\eta^t  \frac{c}{1+\ima F_s(0)}\left(\ind_{H_1(\eta)}+\ind_{H_1(\eta)^c}\right)ds  \right]\\
&\le c\eta +\ev\left[\int_\eta^t  \frac{c}{1+\ima F_s(0)}ds  \ind_{H_1(\eta)}\right]+ct\eta^{1-c\alpha}\\
&\le  c\eta +c\int_\eta^t\frac{\log s}{s}+ct\eta^{1-c\alpha}= c\eta +ct\eta^{1-c\alpha}+c(\log t)^2-c(\log \eta)^2\\
\eae
Now take $\eta= t^\beta$ for $\beta=\frac{1}{6}$ and choosing $\alpha>\frac{1}{c\beta}$ , to obtain 
$$
\ev\left[|M_t(0)|^2\right]\le ct^\beta(1+o(1)).
$$
We thus obtain that
$$
\prob\left[\ima F_t(0)<\frac{\pi t}{2}- t^{\gamma}\right]\le \prob\left[|M_t(0)|>t^{\gamma}\right]\le  \frac{t^\beta}{t^{2\gamma}}
.$$
Choosing $\gamma=\frac{2}{3}$ yields the bound
$$
\prob[H_2(s)]>1- \frac{c}{s^{1+1/6}}
.$$
Repeating the earlier argument with $H_2(\eta)$ replacing $H_1(\eta)$ and using the quantitative Lemma \ref{lem:slit_int_bound},
\bae\label{eq:upper_constant}
\ev\left[|M_t(0)|^2\right]&\le \ev\left[\int_0^t \min\left\{1,\frac{\pi}{4\ima F_s(0)}+\frac{c}{(\ima F_s(0))^{3}}\right\} (\ind_{H_2(s)}+\ind_{H^c_2(s)})ds \right] \\
&\le 1+ \int_1^t \Bigg(\frac{\pi}{4s}+\frac{c}{s^{3}}+\frac{c}{s^{1+1/6}}\Bigg)ds
.\eae
We conclude that
$$
\ev[|M_t(0)|^2]\le \frac{\pi}{4}\log t(1+o(1))
,$$
proving the upper bound.

As for the lower bound, define
\bae
&H'_2(\eta)=\left\{\forall \eta< s: \ima F_s(0)>\pi s/2 -s^{2/3}\right\}\\
&H_3(\eta)=\left\{\forall \eta< s: \ima F_s(0)<\pi s/2 + s^{2/3}\right\}\\
&H_4(\eta)=H_3(\eta)\cap H'_2(\eta).
\eae
Hence, for $\eta$ large enough, by Lemma \ref{lem:slit_int_bound},

\bae\label{eq:lower_exact_const}
\ev\left[|M_t(0)|^2\right]&\ge \ev\left[\int_0^t\int_{-\infty}^\infty  \Big|\slit_x(F_s(0))-F_s(0)\Big|^2dxds \ind_{H_4(\eta)}\right]\\&\ge \ev\left[\int_\eta^t  \min\left\{1,\frac{\pi}{4\ima F_s(0)}-\frac{c}{(\ima F_s(0))^{3}}\right\}ds  \ind_{H_4(\eta)}\right]\\
&\ge \frac{\pi}{4}\prob[H_4(\eta)]\int_\eta^t\frac{1}{s}= \frac{\pi}{4}(1-\eta^{-1/3})\left[\log t-\log \eta-o\left(\eta^{-1}\right)\right],
\eae
Now taking $\eta=\sqrt{t}$, we obtain the lower bound for the variance.
\end{proof}

\section{Proof of Theorem \ref{thm:cov}}
We prove Theorem \ref{thm:cov} by a sequence of lemmas dealing with different regimes of $t$ and $b$.
Before continuing, we will bootstrap the earlier results to improve the fluctuation estimate of Lemma \ref{lem:llnrate}.

\begin{lemma}\label{lem:bootstrapfluctuationrate} For any $z\in\BH$,
$$
\prob\left(\ima F_t(z)<\frac{\pi}{2}t-\sqrt{t}\right)<e^{-c_{\ref{lem:bootstrapfluctuationrate}} t^{\frac{1}{6}}}
.$$
\end{lemma}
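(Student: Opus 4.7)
The plan is to chain $t^{5/6}$ independent increments of length $t^{1/6}$, using Lemma \ref{lem:llnrate} to propagate an exponentially-likely ``height'' event and then using the variance argument of Theorem \ref{thm:var} combined with Freedman's martingale inequality to concentrate the total fluctuation.

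First, following the decomposition used in Lemma \ref{lem:llnrate}, write $F_t(z) = G_N\circ\cdots\circ G_1(z)$ with $N = t^{5/6}$ and $G_i$ independent copies of $F_m$ for $m = t^{1/6}$, and set $z_i = G_i\circ\cdots\circ G_1(z)$. Applying Lemma \ref{lem:llnrate} to each piece and union-bounding over the $N$ pieces shows that
$$H := \{\forall\, i\leq N:\ \ima z_i \geq i\, \xi_{\ref{lem:llnrate}}\, m\}$$
has probability at least $1 - Ne^{-c_{\ref{lem:llnrate}} m} = 1 - t^{5/6}e^{-c_{\ref{lem:llnrate}}t^{1/6}}\geq 1 - e^{-c t^{1/6}}$ for $t$ large, since the polynomial $t^{5/6}$ is dominated by the sub-exponential $e^{c_{\ref{lem:llnrate}}t^{1/6}/2}$.

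Next, I would use the variance argument from the proof of Theorem \ref{thm:var}: on $H$, for each $i\geq 2$, \cite[Lemma 3.3]{berger2022growth} applied to the sub-process $G_i$ with starting point $z_{i-1}$ gives the conditional variance bound
$$\ev\!\left[\bigl|\ima M_m^i(z_{i-1})\bigr|^2\,\bigm|\,\CF_{i-1}\right] \leq c\int_0^m \frac{ds}{1+\ima z_{i-1} + \pi s/2} \leq \frac{c}{i-1},$$
where $\ima M_m^i(z_{i-1}) = \ima G_i(z_{i-1}) - \ima z_{i-1} - \pi m/2$ is the centered martingale increment. Summing yields total predictable quadratic variation $\sum_i V_i \leq c\log t$ on $H$.

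I then apply Freedman's inequality to the martingale $\sum_i \ima M_m^i$, after truncating each increment at polylogarithmic scale $L = (\log t)^2$ so that the truncation bias (controlled by a per-piece Chebyshev estimate) is negligible compared to $\sqrt t$, and the union bound for an untruncated increment costs at most $N\cdot c\log t/L^2 \leq e^{-ct^{1/6}}$. Freedman gives
$$\prob\!\left[\Big|\sum_i \ima M_m^i\Big| > \sqrt t,\, H\right] \leq 2\exp\!\left(-\frac{ct}{\log t + \sqrt t\,(\log t)^2}\right) \leq e^{-c\,t^{1/6}}.$$
Since $\ima F_t(z) - \ima z - \pi t/2 = \sum_i \ima M_m^i$ by the decomposition of \eqref{eq:martingale}, combining the three probability estimates yields the claim.

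The main obstacle is the variance control in the second paragraph, namely showing $V_i \leq c/(i-1)$ \emph{conditionally} on $H$. This requires tracking how the integral bound of \cite[Lemma 3.3]{berger2022growth} depends on the starting height and propagating the event $H$ inductively in $i$; the argument of Theorem \ref{thm:var} adapts directly once one conditions on the lower bound $\ima z_{i-1} \geq (i-1)\xi_{\ref{lem:llnrate}} m$ provided by $H$. Everything else is a routine application of Freedman's inequality, and the specific exponent $1/6$ in the statement is exactly the balance point: piece length $t^{1/6}$ makes each application of Lemma \ref{lem:llnrate} cost $e^{-c t^{1/6}}$, and the $t^{5/6}$ pieces that fit into $[0,t]$ keep the union bound sub-exponential.
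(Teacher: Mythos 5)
Your block structure is the reverse of the paper's: you chain $N=t^{5/6}$ short blocks of length $m=t^{1/6}$, apply Lemma \ref{lem:llnrate} to each block to build the height event $H$, and then try to concentrate the sum of block fluctuations with a martingale inequality. The paper instead uses $t^{1/6}$ long blocks of length $t^{5/6}$, applies Theorem \ref{thm:var} plus Chebyshev to each block, and uses Hoeffding on the \emph{number} of good blocks; the bad blocks need no control at all because composing with slit maps never decreases the imaginary part. Your first two steps are essentially sound (condition on the $\CF_{i-1}$-measurable event $\{\ima z_j\ge j\xi_{\ref{lem:llnrate}}m,\ j\le i-1\}$ rather than on all of $H$, and drop the unjustified $\pi s/2$ term in the integrand --- monotonicity $\ima F_s(z_{i-1})\ge \ima z_{i-1}$ already gives $V_i\le cm/(1+(i-1)\xi_{\ref{lem:llnrate}}m)\le c/(i-1)$, hence total predictable variation $\le c\log t$).

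The genuine gap is the truncation step. With $N=t^{5/6}$ blocks, truncation level $L=(\log t)^2$, and only the Chebyshev tail $\prob\bigl(|\ima M_m^i|>L\bigr)\le c\log t/L^2$ available, the union-bound cost is $N\cdot c\log t/L^2=c\,t^{5/6}/(\log t)^3$, which is not $\le e^{-ct^{1/6}}$ --- it is not even $o(1)$. Moreover no choice of $L$ rescues this with second moments alone: making the union bound exponentially small forces $L$ to be exponentially large, which kills the Freedman exponent $\sqrt t/L$. So as written the proof does not close. It can be repaired without any new moment estimates by noting that you only need a lower-tail bound and that the increments are one-sidedly bounded: since slit maps do not decrease the imaginary part, $\ima M_m^i(z_{i-1})=\ima z_i-\ima z_{i-1}-\tfrac{\pi}{2}m\ge -\tfrac{\pi}{2}t^{1/6}$ deterministically. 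Applying Freedman's inequality to $-\sum_i \ima M_m^i$ (increments bounded above by $\tfrac{\pi}{2}t^{1/6}$, predictable variation $\le c\log t$ on the good event) gives $\prob\bigl(\sum_i\ima M_m^i<-\sqrt t,\ H\bigr)\le \exp\bigl(-c\,t^{1/3}\bigr)$ with no truncation at all, and the overall error is then dominated by the $t^{5/6}e^{-c_{\ref{lem:llnrate}}t^{1/6}}$ cost of building $H$, which matches the claimed exponent. With that replacement your route works and is a genuinely different argument from the paper's counting scheme.
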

\begin{proof}
Let $G_{t^\alpha}^j(z)=z+i\frac{\pi}{2}t^\alpha+M_{t^\alpha}^j(z)$ be i.i.d. distributed as $F_{t^\alpha}(z)$. By Theorem \ref{thm:var}, for any $z\in\BH$,
\bae
\prob\left(|M_{t^\alpha}^j(z)|>t^\beta\right)\le\frac{c\alpha\log t}{t^{2\beta}}
.\eae
Denote $N_t=\left|\{1\le j\le t^{1-\alpha}: |M_{t^\alpha}^j(G_{t^\alpha}^{j-1}\circ\cdots\circ G_{t^\alpha}^1(z))|<t^\beta\}\right|$. By Hoeffding's inequality 
\bae
\prob\left(N_t<\left(1-\frac{2c\alpha\log t}{t^{2\beta}}\right)t^{1-\alpha}\right)<e^{c t^{1-\alpha}}
\eae
Thus, on an event with probability greater than $1-e^{ct^{1-\alpha}}$, we get that
\bae
\ima F_t(z)&\ge \left(1-\frac{2c\alpha\log t}{t^{2\beta}}\right)t^{1-\alpha}\left(\frac{\pi}{2}t^\alpha-t^\beta\right)= \left(1-\frac{2c\alpha\log t}{t^{2\beta}}\right)\left(\frac{\pi}{2}t-t^{1-\alpha+\beta}\right)\\
&\ge\frac{\pi}{2}t-t^{1-\alpha+\beta}-2c\alpha\log t\frac{\pi}{2}t^{1-2\beta}
.\eae
Choose $\alpha=\frac{5}{6}$ and $\beta=\frac{1}{3}$ to obtain the statement for some constant $c_{\ref{lem:bootstrapfluctuationrate}}>0$.
\end{proof}


Next, we prove that in order to study the fluctuations of $F_t$ it is enough to grow the process on a window of size $f(t)$, which depends on $t$ in a way which we describe below. Remember the definition of $M_t^m(z)$ from \eqref{eq:finitewindow}.
\begin{lemma}\label{eq:approxwindow}
There is a $c>0$ such that for any function $f(t)$ satisfying $\lim_{t\to\infty}\frac{t}{f(t)}=0$, for any $t$ large enough,
\bae
\ev\left[\Big|M_t(0)-M_t^{f(t)}(0)\Big|^2\right]<\frac{c t}{f(t)}.
\eae
\end{lemma}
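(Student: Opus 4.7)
The plan is to identify $N_t := M_t(0) - M_t^{f(t)}(0)$ as a martingale in the natural filtration of $\ppp$, and to bound $\ev|N_t|^2$ via its predictable quadratic variation. The jumps of $M_t(0)$ at a Poisson point $(x,s)$ are $\slit_x(F_{s^-}(0))-F_{s^-}(0)$ for all $x\in\BR$, whereas those of $M_t^{f(t)}(0)$ are $\slit_x(F_{s^-}^{f(t)}(0))-F_{s^-}^{f(t)}(0)$ only for $|x|\le f(t)$. Writing the inside-window jump difference via the mean value theorem applied to $\slit_x$ between $F_{s^-}^{f(t)}(0)$ and $F_{s^-}(0)$, one obtains
\[
\ev\langle N\rangle_t \;\le\; \ev\!\int_0^t\!\!\int_{|x|>f(t)}\!|\slit_x(F_s(0))-F_s(0)|^2\,dx\,ds \,+\, \ev\!\int_0^t\!\!\int_{|x|\le f(t)}\!\bigl|\slit_x'(\eta_{s,x})-1\bigr|^2\bigl|\Delta_s\bigr|^2\,dx\,ds,
\]
with $\Delta_s:=F_s(0)-F_s^{f(t)}(0)$ and $\eta_{s,x}$ on the relevant segment.

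The outside-window integral supplies the main term. Using the expansion $\slit_x(z)-z=-\tfrac{1}{2(z-x)}+O(|z-x|^{-3})$ together with the high-probability event $\{|\rea F_s(0)|\le f(t)/2,\ s\le t\}$ (justified by Theorem~\ref{thm:var}, Chebyshev, and the hypothesis $f(t)\gg t$), the inner integral is bounded by $c/f(t)$, producing a total contribution of $\le ct/f(t)$.

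For the inside-window integral, the bound $|\slit_x'(z)-1|\le c/|z-x|^2$ gives $\int_{\BR}|\slit_x'(\eta)-1|^2\,dx\le \min\{C,\, C/(\ima\eta)^3\}$, and Lemma~\ref{lem:bootstrapfluctuationrate} (together with an analogous bound for $F^{f(t)}$) ensures $\ima\eta_{s,x}\gtrsim s$ with high probability. To control $\ev|\Delta_s|^2$, decompose $\Delta_s = N_s + A_s^\Delta$ with $A_s^\Delta := \int_0^s\!\!\int_{|x|>f(t)}[\slit_x(F_u^{f(t)}(0))-F_u^{f(t)}(0)]\,dx\,du$, exploiting the identity $\int_{\BR}[\slit_x(z)-z]\,dx\equiv i\pi/2$. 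The leading cancellation $\slit_x(z)-z\approx -1/(2(z-x))$ yields $|A_s^\Delta|\lesssim s^2/f(t)$, hence $\ev|\Delta_s|^2\le 2\ev|N_s|^2+O(s^4/f(t)^2)$. Substituting into the inside-window bound and splitting the $s$-integral at an intermediate scale $s_0=t^{1/3}$ to absorb the $1/s^3$ singularity at $0$, a Gronwall-type argument closes the loop and yields $\ev|N_t|^2\le ct/f(t)$.

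The principal obstacle is the self-referential nature of the inside-window estimate, in which $\Delta$ depends on $N$. This is resolved by the observation that the drift gap $A^\Delta$ is controlled purely by outside-window integrals of $\slit_x(z)-z$, and therefore admits a direct bound of order $s^2/f(t)$ independently of the inside dynamics, leaving a clean Gronwall inequality for $\ev|N_s|^2$ that closes thanks to the integrability of $\min(1,1/s^3)$ away from~$0$.
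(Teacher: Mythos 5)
Your proposal is correct and follows essentially the same route as the paper: split the predictable quadratic variation of $N_t=M_t(0)-M_t^{f(t)}(0)$ into the outside-window jumps of $M_t(0)$ (giving the main $ct/f(t)$ term via the decay of $|\slit_x(z)-z|^2$) and the inside-window jump differences (bounded by $|\slit_x'-1|^2$ along a segment, integrating to $c/(1+s^3)$ on the high-probability growth event), then close with Gr\"onwall. The only real difference is that you make the drift gap explicit by writing $\Delta_s=N_s+A_s^{\Delta}$ and bounding $A_s^{\Delta}$ directly through the identity $\int_{\BR}[\slit_x(z)-z]\,dx=i\pi/2$, whereas the paper runs Gr\"onwall on $\ev\big[\max_{s\le t}|F_s(0)-F_s^{f(s)}(0)|^2\big]$ and leaves that discrepancy implicit; your version is a slightly more careful bookkeeping of the same argument.
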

\begin{proof}
Since both $M_s(0)$ and $M_s^{f(s)}(0)$ are zero mean martingales, so is their difference. Thus we can write
\bae\label{eq:bounding_cut_window}
\ev\left[\Big|M_t(0)-M_t^{f(t)}(0)\Big|^2\right]&\le 8 \ev\left[\int_0^t\int_{-f(t)}^{f(t)}\Big|(\slit_x-\text{id})F_s(0)-(\slit_x-\text{id})F_s^{f(t)}(0)\Big|^2dxds\right]\\
 &+8 \ev\left[\int_0^t\int_{|x|>f(t)}\Big|\slit_x(F_s(0))-F_s(0)\Big|^2dxds\right]
 .\eae 
 By \cite[Lemma 3.3]{berger2022growth} we have that for all $x>2$, 
 \[
 \Big|\slit_x(F_s(0))-F_s(0)\Big|^2\le \frac{c}{x^2+(\ima F_s(0))^2}
 .\]
 Thus,
 \bae
 &\ev\left[\int_0^t\int_{|x|>f(t)}\Big|\slit_x(F_s(0))-F_s(0)\Big|^2dxds\right] \\&\le\ev\left[\int_0^t\frac{2}{\ima F_s(0)}\left[\frac{\pi}{2}-\tan^{-1}\left(\frac{f(t)}{\ima F_s(0)}\right)\right]ds\right]\le\frac{2t}{f(t)}
 .\eae
 As for the other summand of \eqref{eq:bounding_cut_window}, We take a path $\gamma^s:[0,1]\to\BC$ connecting  $F_s(0)$ and $F_s^{f(t)}$ and write 
 \bae
 \Big|(\slit_x-\text{id})F_s(0)-(\slit_x-\text{id})F_s^{f(t)}(0)\Big|^2\le |F_s(0)-F_s^{f(t)}(0)|^2\int_0^1\Big|\slit'_x(\gamma^s(u))-1\Big|^2du
 \eae
Denote $g(t)=\ev\left[\max_{s\le t}\Big|F_s(0)-F_s^{f(s)}(0)\Big|^2\right]$, and the event $H_{\ref{lem:bootstrapfluctuationrate}}(s)=\{\ima F_s(z)\ge\frac{\pi}{2}s-\sqrt{s}\}$, thus 
\bae
&\ev\left[\int_0^t\int_{-\infty}^\infty\Big|(\slit_x-\text{id})F_s(0)-(\slit_x-\text{id})F_s^{f(t)}(0)\Big|^2dxds\right]\\
&\le c\int_0^t\prob(H_{\ref{lem:bootstrapfluctuationrate}}(s)^c)+\ev\left[\int_0^t\int_{-\infty}^\infty\Big|(\slit_x-\text{id})F_s(0)-(\slit_x-\text{id})F_s^{f(t)}(0)\Big|^2dxds\ind_{H_{\ref{lem:bootstrapfluctuationrate}}(s)}\right]\\
&\le c+\ev\left[\int_0^t \max_{s\le t}\Big|F_s(0)-F_s^{f(s)}(0)\Big|^2\frac{c}{1+\ima (F_s(0))^3} ds \ind_{H_{\ref{lem:bootstrapfluctuationrate}}(s)}\right]\\
&\le c+\int_0^t g(s)\frac{c}{1+s^3}ds,
\eae
where in the first inequality we used the fact that there is a $c>0$ such that $|\slit_x(z)-z|<c$ for all $x\in \BR$ and $z\in \BH$ (see \cite[Appendix A]{berger2022growth}).
Now by Gronwall's inequality, for some sonstant $c>0$,
\bae
g(t)\le \frac{2t}{f(t)}e^c,
\eae
Finishing the proof.

\end{proof}


\begin{lemma}\label{lem:cov_far}
For any $t>0$ and and $ b\ge 2t\log (t)^3$, we have  
$$\Big|\mathrm{Cov}(M_t(0),M_t(b))\Big|\le c\sqrt{\frac{t}{b}\log t}.$$
\end{lemma}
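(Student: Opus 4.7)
The plan is a decoupling argument. When $b$ is much larger than $t$, the random variables $M_t(0)$ and $M_t(b)$ are each determined mostly by Poisson points near $x=0$ and $x=b$ respectively, so after truncating each to a suitable local window around its base point one obtains independent approximations, with the $L^2$ approximation error controlled by Lemma \ref{eq:approxwindow}.

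Concretely, I would set $f=f(t)=b/2$, use $M_t^{f}(0)$ as in \eqref{eq:finitewindow}, and analogously define $\widetilde{M}_t^{f}(b)$ to be the martingale part of the Doob decomposition of the composition of slit maps built from the Poisson points in $[b-f,b+f]\times\BR_+$, evaluated at $b$. By horizontal stationarity of $\ppp$, the joint law of $(M_t(b),\widetilde{M}_t^{f}(b))$ equals the joint law of $(M_t(0),M_t^{f}(0))$. The hypothesis $b\ge 2t\log(t)^3$ gives $t/f=2t/b\le (\log t)^{-3}\to 0$, so Lemma \ref{eq:approxwindow} yields
$$
\ev\!\left[|M_t(0)-M_t^{f}(0)|^2\right]\le \frac{2ct}{b},\qquad \ev\!\left[|M_t(b)-\widetilde{M}_t^{f}(b)|^2\right]\le \frac{2ct}{b}.
$$
Moreover, since the windows $[-f,f]$ and $[b-f,b+f]=[f,3f]$ meet only at the single point $x=f$, the Poisson points used to construct $M_t^{f}(0)$ and $\widetilde{M}_t^{f}(b)$ lie in disjoint sets, so $M_t^{f}(0)$ and $\widetilde{M}_t^{f}(b)$ are independent.

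Given this setup, I split
$$
\mathrm{Cov}(M_t(0),M_t(b))=\mathrm{Cov}\bigl(M_t^{f}(0),\widetilde{M}_t^{f}(b)\bigr)+\mathrm{Cov}\bigl(M_t(0)-M_t^{f}(0),M_t(b)\bigr)+\mathrm{Cov}\bigl(M_t^{f}(0),M_t(b)-\widetilde{M}_t^{f}(b)\bigr).
$$
The first term vanishes by independence. For the other two, Cauchy-Schwarz combined with Theorem \ref{thm:var} (which gives $\mathrm{Var}(M_t(b))=\mathrm{Var}(M_t(0))=\tfrac{\pi}{4}\log t\,(1+o(1))$, and hence $\mathrm{Var}(M_t^{f}(0))\le c\log t$ via the $L^2$ triangle inequality and the display above) and the two $L^2$ bounds yields that each of the remaining covariances is at most $c'\sqrt{t\log(t)/b}$. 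Summing produces the claimed estimate.

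The step I expect to require the most care is the formal construction of $\widetilde{M}_t^{f}(b)$ together with its measurability properties, so that both (i) it is genuinely independent of $M_t^{f}(0)$, and (ii) the $L^2$ bound of Lemma \ref{eq:approxwindow} transfers to it by stationarity. Once these two checks are in place, the rest is routine Cauchy-Schwarz bookkeeping, and I do not expect a substantive obstruction.
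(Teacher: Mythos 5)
Your proposal is correct and follows essentially the same route as the paper: truncate to windows of half-width $f=b/2$, invoke Lemma \ref{eq:approxwindow}, use independence of the disjoint windows to kill the main covariance term, and control the cross terms by Cauchy--Schwarz together with the variance bound of Theorem \ref{thm:var}. Your explicit construction of $\widetilde{M}_t^{f}(b)$ from the window $[b-f,b+f]$ (and the translation-invariance argument transferring the $L^2$ bound) simply makes precise what the paper's notation $M_t^{f(t)}(b)$ tacitly assumes, and your three-term splitting of the covariance versus the paper's four-term expansion is an immaterial bookkeeping difference.
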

\begin{proof}
\bae
&\ev[M_t(0)M_t(b)]\\
&=\ev\Big[(M_t(0)-M_t^{f(t)}(0)+M_t^{f(t)}(0))(M_t(b)-M_t^{f(t)}(b)+M_t^{f(t)}(b))\Big]\\
&=\ev\Big[(M_t(0)-M_t^{f(t)}(0))M_t^{f(t)}(b)\Big]+\ev\Big[(M_t(b)-M_t^{f(t)}(b))M_t^{f(t)}(0)\Big]
\\&+\ev\Big[(M_t(0)-M_t^{f(t)}(0))(M_t(b)-M_t^{f(t)}(b))\Big]+\ev[M_t^{f(t)}(0)M_t^{f(t)}(b)]
\eae
By applying Cauchy-Schwartz and Lemma \ref{eq:approxwindow} with the choice $f(t)=b/2>t\log(t)^3$, and the fact that $M_t^{f(t)}(0)$ and $M_t^{f(t)}(b)$ are independent, we obtain
\bae\label{eq:covzerorate}
&\Big|\ev[M_t(0)M_t(b)]\Big|\le 2\sqrt{\frac{ct}{f(t)}\log t}+\frac{ct}{f(t)}+0
\eae
Thus by our choice of $f(t)$ then the RHS of \eqref{eq:covzerorate} is smaller than $c\sqrt{\frac{t}{b}\log t}$. 
\end{proof}


\begin{lemma}
For any $t>b$, we have  
$$\Big|\mathrm{Cov}(M_t(0),M_t(b))\Big|=\frac{\pi}{4}\Big(\log(t)-\log(b)\Big)+O\Big(\log\log(b)+\sqrt{\log\log (b)}\sqrt{\log b}\Big),$$
as $b\to\infty$.
\end{lemma}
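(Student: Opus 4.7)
The plan is to compute $\ev\bigl[M_t(0)\overline{M_t(b)}\bigr]$ directly from the cross-variation of the two mean-zero martingales, in the same spirit as the variance calculation in Theorem \ref{thm:var}. Because both $M_{\cdot}(0)$ and $M_{\cdot}(b)$ are compensated Poisson integrals with respect to the common driving process $\ppp$, one has
\[
\ev\bigl[M_t(0)\overline{M_t(b)}\bigr]=\ev\!\left[\int_0^t\!\!\int_{\BR}\bigl(\slit_x(F_s(0))-F_s(0)\bigr)\overline{\bigl(\slit_x(F_s(b))-F_s(b)\bigr)}\,dx\,ds\right].
\]
The leading asymptotic $\slit_x(z)-z=-\tfrac{1}{2(z-x)}+O(|z-x|^{-3})$ combined with the residue computation $\int_{\BR}\frac{dx}{(z_1-x)(\overline{z_2}-x)}=\frac{2\pi i}{z_1-\overline{z_2}}$ gives, after taking real parts,
\[
\rea\!\int_{\BR}(\slit_x(z_1)-z_1)\overline{(\slit_x(z_2)-z_2)}\,dx=\frac{\pi(\ima z_1+\ima z_2)}{2\bigl((\rea(z_1-z_2))^{2}+(\ima z_1+\ima z_2)^{2}\bigr)}+O\!\left(\tfrac{1}{(\ima z_1+\ima z_2)^{3}}\right).
\]

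Substituting $z_1=F_s(0)$ and $z_2=F_s(b)$, I would invoke Lemma \ref{lem:bootstrapfluctuationrate}, Theorem \ref{thm:var}, and an analogous estimate on the martingale $M_s(0)-M_s(b)$ to localize the pair with overwhelming probability to $\ima F_s(\cdot)=\tfrac{\pi s}{2}+O(s^{2/3})$ and $\rea(F_s(0)-F_s(b))=-b+O(\sqrt{\log s})$. The $s$-integral is then split at $s^{*}=b$. For $s\in[0,b]$, the denominator is of order $b^{2}$ while the numerator is $O(s)$, so the integrand is $O(s/b^{2})$ and contributes $O(1)$ overall. For $s\in[b,t]$, the imaginary parts dominate, the integrand reduces to $\tfrac{\pi}{4\,\ima F_s(0)}(1+o(1))$, which is precisely the expression producing $\tfrac{\pi}{4}\log t$ in Theorem \ref{thm:var}; integrating this from $b$ to $t$ yields the leading $\tfrac{\pi}{4}(\log t-\log b)$. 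The $O(\log\log b)$ correction arises from the transition window $s\in[b,b\log b]$ where neither approximation is sharp and the full rational integrand must be integrated directly; the $O(\sqrt{\log b\cdot\log\log b})$ term comes from a Cauchy--Schwarz bound on the complement of the localization event, combining Theorem \ref{thm:var} (contributing the $\sqrt{\log b}$ factor via an $L^{2}$ estimate of the martingale) with the small-probability estimate of Lemma \ref{lem:bootstrapfluctuationrate} (contributing the $\sqrt{\log\log b}$ factor).

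The principal obstacle is the transition regime $s\asymp b$: neither the decoherent ($s\ll b$) nor the coherent ($s\gg b$) approximation is sharp there, forcing an exact integration of the full rational integrand while the pair $(F_s(0),F_s(b))$ is tracked jointly. Unlike in Lemma \ref{lem:cov_far}, one cannot decouple the two martingales by a windowing trick because $M_s(0)$ and $M_s(b)$ remain strongly correlated in the coherent regime; their joint fluctuations must instead be propagated through a self-consistent bootstrap along the lines of Lemma \ref{lem:bootstrapfluctuationrate} and the proof of Theorem \ref{thm:var}, which is the source of the particular shape of the stated error term.
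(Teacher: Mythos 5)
Your route is genuinely different from the paper's. The paper never evaluates the cross\-/bracket: it splits time at $b/\log(b)^3$ and $b\log(b)^3$, expands the covariance over the corresponding martingale increments, and treats the three regimes separately --- the early block by the decoupling Lemma \ref{lem:cov_far} (built on the finite-window Lemma \ref{eq:approxwindow}), as in \eqref{eq:3bound}; the middle block by the martingale property, Cauchy--Schwarz and Theorem \ref{thm:var}, which is exactly where the stated $O\big(\log\log b+\sqrt{\log\log b}\sqrt{\log b}\big)$ error comes from, as in \eqref{eq:2bound}; and the late block by Cauchy--Schwarz for the upper bound and, for the lower bound, by the coherence estimate \eqref{eq:derivativeuse} showing the quadratic variation of $M_\cdot(0)-M_\cdot(b)$ grows negligibly after time $b\log(b)^3$. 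Your plan --- polarize the Poisson isometry that the paper already uses for $|M_t(0)|^2$ and for $|M_t(0)-M_t(b)|^2$, evaluate the $x$-integral as a Poisson-kernel expression (your residue identity and its real part are correct), localize $(F_s(0),F_s(b))$, and split the $s$-integral at $s=b$ --- is a legitimate alternative, and in principle sharper: the exact kernel $\frac{\pi(\ima F_s(0)+\ima F_s(b))}{2\left(b^2+(\ima F_s(0)+\ima F_s(b))^2\right)}$ integrates across the transition $s\asymp b$ with only an $O(1)$ loss, so a careful execution would beat the stated error term rather than merely match it.

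There are, however, two genuine gaps. First, the localization is asserted with ``overwhelming probability'', but no such input exists for what you need: Lemma \ref{lem:bootstrapfluctuationrate} is a one-sided lower bound on $\ima F_s$ (the matching upper bound is only the polynomially-likely event $H_3$ from the proof of Theorem \ref{thm:var}), and for $\rea(F_s(0)-F_s(b))=-b+O(\sqrt{\log s})$ all that Theorem \ref{thm:var} gives is a second moment, hence Chebyshev bounds of the form $\prob\big(|\rea(M_s(0)-M_s(b))|>b/2\big)\le c\log s/b^2$, pointwise in $s$ and far from uniform. The argument survives, but only if you split on good/bad events and pay for the bad events with the deterministic bound $\int_{\BR}|\slit_x(z)-z|^2dx\le c/(1+\ima z)$ and Cauchy--Schwarz in $x$, exactly as the paper does inside Theorem \ref{thm:var}; this bookkeeping is missing, and your attribution of the two error terms (``transition window'' and ``Cauchy--Schwarz on the complement'') does not correspond to any computation in your scheme --- as noted, the transition window actually costs $O(1)$. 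Second, your leading constant is internally inconsistent: your own localization puts $\ima F_s(0)\approx\frac{\pi}{2}s$, so $\int_b^t\frac{\pi}{4\,\ima F_s(0)}\,ds=\frac12(\log t-\log b)(1+o(1))$, not $\frac{\pi}{4}(\log t-\log b)$; you recover $\frac{\pi}{4}$ only by quoting Theorem \ref{thm:var} as a black box, i.e.\ by silently replacing $\ima F_s(0)$ with $s$. (The same tension is present in the paper's own passage from $\frac{\pi}{4\ima F_s(0)}$ to $\frac{\pi}{4s}$ in \eqref{eq:lower_exact_const}, but in your direct-computation framework the two statements visibly clash, and you must either carry the constant your kernel produces or derive the asymptotics purely from Theorem \ref{thm:var} via Cauchy--Schwarz as the paper does --- not both.) A minor further point: your formula computes $\rea\,\ev[M_t(0)\overline{M_t(b)}]$, so to conclude for $|\mathrm{Cov}(M_t(0),M_t(b))|$ you still owe a (routine, $O(1)$) bound on the remaining imaginary part of the same kernel.
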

\begin{proof}
First we bound using Lemma \ref{lem:cov_far},
\bae\label{eq:3bound}
\ev\Big[(M_{b/\log(b)^3}(b)-M_0(b))(M_{b/\log(b)^3}(0)-M_0(0))\Big]&=\ev\Big[M_{b/\log(b)^3}(b)M_{b/\log(b)^3}(0)\Big]\\&
\le \frac{1}{\log{(b/\log{(b)^3}})}
\eae
Second by the martingale property for $t>s$, $$\ev[|M_t(b)-M_s(b)|^2]=\ev[|M_t(b)|^2-|M_s(b)|^2],$$
so by Theorem \ref{thm:var} 
\bae\label{eq:2bound}
&\ev\Big[(M_{b\log(b)^3}(b)-M_{b/\log(b)^3}(b))M_t(0)\Big] \\ 
&=\ev\Big[(M_{b\log(b)^3}(b)-M_{b/\log(b)^3}(b))(M_t(0)-M_{b\log(b)^3}(0)+M_{b\log(b)^3}(0))\Big] \\ 
&=\ev\Big[(M_{b\log(b)^3}(b)-M_{b/\log(b)^3}(b))M_{b\log(b)^3}(0)\Big] \\ 
&\le\sqrt{\ev\Big[\Big|(M_{b\log(b)^3}(b)-M_{b/\log(b)^3}(b))\Big|^2\Big]\ev\Big[\Big|(M_{b\log(b)^3}(0)\Big|^2\Big]}\\
&\le c\sqrt{\log({b\log(b)^3)}}\Big(\sqrt{\log( b\log (b)^3)-log(b/\log (b)^3)}\Big)\le c\sqrt{\log\log (b)^3}\sqrt{\log b}
,\eae
where in the second equality we used the martingale property and the fact that $M_s(b)$ and $M_s(0)$ are martingales on a common filtration i.e. for $t_1<t_2<t_3$, $\ev\left[(M_{t_2}(b)-M_{t_1}(b))(M_{t_3}(0)-M_{t_2}(0))\right]=0$. 

The third and final estimate we need before we can complete the proof of the lemma follows Cauchy-Schwarz and \eqref{eq:upper_constant},
\bae\label{eq:1bound}
&\left|\ev\Big[(M_{t}(b)-M_{b\log(b)^3}(b))(M_{t}(0)-M_{b\log(b)^3}(0))\Big]\right|\\
&\le \sqrt{\ev\Big[|M_{t}(b)-M_{b\log(b)^3}(b)|^2\Big]}\sqrt{\ev\Big[|M_{t}(0)-M_{b\log(b)^3}(0)|^2\Big]}\\
&\le \frac{\pi}{4}\Big(\log(t)-\log(b\log(b)^3)\Big).
\eae 
Finally we can estimate using \eqref{eq:3bound}, \eqref{eq:2bound}, \eqref{eq:1bound}, 
\bae\label{eq:covequality}
&\Big|\mathrm{Cov}(M_t(0),M_t(b))\Big|=\\
&\Big|\ev\Big[\Big((M_t(b)-M_{b\log(b)^3}(b))+(M_{b\log(b)^3}(b)-M_{b/\log(b)^3}(b))+(M_{b/\log(b)^3}(b)-M_0(b))\Big)\\
&\Big((M_t(0)-M_{b\log(b)^3}(0))+(M_{b\log(b)^3}(0)-M_{b/\log(b)^3}(0))+(M_{b/\log(b)^3}(0)-M_0(0))\Big)\Big]\Big|\\
&\le \frac{\pi}{4}\Big(\log(t)-\log(b)\Big)+O(\log(\log(b))+\sqrt{\log\log (b)}\sqrt{\log b}).
\eae

For the lower bound we write from the first line of \eqref{eq:1bound},
\bae\label{eq:lowerbound}
&\ev\Big[(M_{t}(b)-M_{b\log(b)^3}(b))(M_{t}(0)-M_{b\log(b)^3}(0))\Big]=\\
&\ev\Big[(M_{t}(b)-M_{b\log(b)^3}(b))^2\\&+(M_{t}(b)-M_{b\log(b)^3}(b))(M_{t}(0)-M_{b\log(b)^3}(0)-M_{t}(b)+M_{b\log(b)^3}(b))\Big]\
\eae 
Note that for $r<t$
\bae\label{eq:derivativeuse}
&\ev\left[|M_t(0)-M_t(b)|^2-|M_r(0)-M_r(b)|^2\right]\\
&=\ev\left[\int_r^t\int_{-\infty}^\infty \Big|(\slit_x-\text{id})F_s(0)-(\slit_x-\text{id})F_s(b)\Big|^2dxds\right]\\
&\le \ev\left[\int_r^t\int_{-\infty}^\infty \Big(|F_s(0)-F_s(b)|^2\int_0^1|\slit_x^'(\gamma(u))-1|^2du\Big)dxds\right]\\
&\le \ev\left[\sup_{r\le s\le t}|F_s(0)-F_s(b)|^2 \int_0^1 \int_b^t \int_{-\infty}^\infty |\slit_x^'(\gamma(u))-1|^2dxdsdu\right]\\
&\le c (\log (t)+b^2) \left(\frac{1}{1+r^2}-\frac{1}{1+t^2}\right)
,\eae
Thus, using \eqref{eq:lower_exact_const}, \eqref{eq:lowerbound} and \eqref{eq:derivativeuse} with $r=b\log(b)^3$,
\bae\label{eq:lowerbound2}
&\ev\Big[\Big|(M_{t}(b)-M_{b\log(b)^3}(b))(M_{t}(0)-M_{b\log(b)^3}(0))\Big|\Big]\\
&\ge \frac{\pi}{4}\Big(\log(t)-\log(b\log(b)^3)\Big)\\
 &-\sqrt{\Big(c\Big(\log(t)-\log(b\log(b)^3)\Big)\Big)(\log (t)+b^2) \left(\frac{1}{1+(b\log(b)^3)^2}-\frac{1}{1+t^2}\right)}\\
&=\frac{\pi}{4}\Big(1-O\Big(\frac{1}{(\log b)^6}\Big)\Big)\Big(\log(t)-\log(b)\Big)-O(\log\log b)
\eae
\end{proof}

\section{Bounding the SHL generator}
In this section we wish to study the exponential moments of $M_t(0)$, and later use them to bound the maximal fluctuation in an interval. 
Let $Y_t= \ima (F_t(0))$ and
$$
\Delta(x,\zeta):= \ima \left(\slit_x(i \zeta)-i \zeta\right)
.$$
\begin{theorem}\label{thm:moment}
There is a $c>0$ such that for all $\alpha>0$ and all $z\in\BH$,
$$
\ev\left[e^{\alpha \ima M_t(z)}\right]\le e^{\frac{\pi}{2}\alpha^2e^\alpha }  t^{\alpha^2}
.$$ 
\end{theorem}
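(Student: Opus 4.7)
The plan is to exploit the pure-jump Markov structure of $Y_t := \ima F_t(z)$. Writing $\ima M_t(z) = Y_t - \ima z - \tfrac{\pi}{2}t$, at each arrival $(x,t)$ of $\ppp$ the process $Y_t$ jumps by $\Delta(x-\rea F_{t^-}(z), Y_{t^-})$, and translation invariance of Lebesgue measure in $x$ shows that these jumps occur with $x$-intensity $dx$ independent of $\rea F_{t^-}(z)$. Setting $\phi(t) := \ev[e^{\alpha \ima M_t(z)}]$, Dynkin's formula gives
\[
\phi(t) = 1 + \int_0^t \ev\!\left[e^{\alpha \ima M_s(z)}\, G(Y_s)\right] ds, \qquad G(y) := \int_{\BR}\bigl[e^{\alpha \Delta(x,y)} - 1 - \alpha \Delta(x,y)\bigr] dx,
\]
where the $-\alpha \Delta$ term arises because the compensator $\int \Delta(x,y)\,dx = \tfrac{\pi}{2}$ underlying the decomposition \eqref{eq:martingale} cancels the deterministic drift $-\alpha\pi/2$ of $\ima M_t(z)$.

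Next I would bound $G$ pointwise. Two structural facts about $\Delta$ are key: $\Delta(x,y)\ge 0$, because the slit map only pushes $iy$ upward (direct modulus calculation gives $|(iy-x)^2-1|\ge |y^2+x^2-1|$, whence $\ima \slit_x(iy)\ge y$), and $\Delta(x,y)\le 1$, the slit-length estimate of \cite[Appendix A]{berger2022growth}. The elementary inequality $e^a - 1 - a \le \tfrac{a^2}{2} e^a$ on $[0,\infty)$, combined with the asymptotic $\int_{\BR}\Delta(x,y)^2\,dx \le \tfrac{\pi}{4y} + O(y^{-3})$ that underlies Lemma \ref{lem:slit_int_bound} and \eqref{eq:upper_constant}, yields
\[
G(y) \le \frac{\alpha^2 e^\alpha}{2}\cdot\frac{c}{y} \quad \text{for $y$ large,}
\]
while the convexity bound $e^{\alpha \Delta}\le 1 + (e^\alpha - 1)\Delta$ on $[0,1]$ gives the uniform estimate $G(y)\le \tfrac{\pi}{2}(e^\alpha - 1 - \alpha)$ valid for all $y\ge 0$.

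The Gronwall step would use the almost sure growth of $Y_s$. I would partition the integrand over the event $B_s := \{Y_s \ge \tfrac{\pi}{2}s - \sqrt{s}\}$, whose complement has probability at most $e^{-c s^{1/6}}$ by Lemma \ref{lem:bootstrapfluctuationrate}. On $B_s$, $G(Y_s)$ is dominated by a deterministic $C_\alpha/s$, producing a differential inequality $\phi'(t) \le (C_\alpha/t)\phi(t)$ that integrates to the polynomial $t^{\alpha^2}$ factor once the sharp constants $\pi/4$ (from the asymptotic of $\int \Delta^2\,dx$) and $\pi/2$ (from the drift of $Y_s$) are tracked. On $B_s^c$, the constraint $\ima M_s(z)\le -\sqrt{s}$ forces $e^{\alpha \ima M_s(z)}\le e^{-\alpha\sqrt{s}}$, which combined with the uniform $G$-bound makes the bad-event contribution integrable in $s$ and absorbed into the constant prefactor.

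The main obstacle will be the short-time regime $s\lesssim 1$, where $Y_s$ may be small and only the uniform bound $G(y) \leq \tfrac{\pi}{2}(e^\alpha - 1 - \alpha)$ applies; this regime produces the nontrivial prefactor $e^{\pi \alpha^2 e^\alpha/2}$, which must simultaneously absorb the bad-event contribution. A useful auxiliary observation is the Jensen lower bound $\phi(s)\ge e^{\alpha\,\ev[\ima M_s(z)]}=1$, which prevents error terms from blowing up when dividing through by $\phi$ while iterating the Gronwall argument.
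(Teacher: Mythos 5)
Your proposal follows essentially the same skeleton as the paper's proof: exploit the Markov/Feller structure of $Y_t=\ima F_t(z)$, write the jump generator $\int[f(\zeta+\Delta(x,\zeta))-f(\zeta)]dx$, apply Dynkin's formula to an exponential, control the regime where $Y_s$ deviates from $\tfrac{\pi}{2}s$ by a concentration event, and close with Gr\"onwall. The differences are in how the generator is estimated, and they are worth noting. The paper works with $f(y)=e^{\alpha y}$ directly, bounds $f(\zeta+\Delta)-f(\zeta)\le f'\bigl(\zeta+\tfrac{1}{1+\zeta}\bigr)\Delta$, uses $\int\Delta\,dx=\tfrac{\pi}{2}$ together with a Chebyshev/FKG negative-correlation factorization of $\ev[e^{\alpha Y_s}e^{\alpha/(1+Y_s)}]$, and only cancels the resulting $e^{\frac{\pi}{2}\alpha t}$ against the drift at the very end; you instead compensate the drift inside the generator, so the integrand becomes $e^{\alpha\Delta}-1-\alpha\Delta$, which you bound quadratically via $e^a-1-a\le\tfrac{a^2}{2}e^a$ and $\int\Delta^2dx\le\tfrac{\pi}{4y}+O(y^{-3})$ (Lemma \ref{lem:slit_int_bound}). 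This avoids the correlation step and in fact yields the sharper leading term $\tfrac{\alpha^2}{4}\log t$, which more than covers the claimed $t^{\alpha^2}$; your bad-event treatment is also cleaner, since on $B_s^c$ the deterministic bound $e^{\alpha\ima M_s(z)}\le e^{-\alpha\sqrt{s}}$ makes that contribution integrable without invoking the probability estimate at all (the paper instead uses $\prob(H_2(s)^c)\lesssim s^{-7/6}$). The one place requiring the same care as in the paper is the regime $s\lesssim\alpha$, where $e^{\alpha/(1+Y_s)}$ is not $O(1)$: the crude uniform bound $G\le\tfrac{\pi}{2}(e^\alpha-1-\alpha)$ over an interval of length of order $\alpha$ gives an exponent of order $\alpha^3e^\alpha$ rather than the stated $\tfrac{\pi}{2}\alpha^2e^\alpha$, so you need the refined estimate $G(y)\le\tfrac{\pi\alpha^2}{4}\,\tfrac{e^{\alpha/(1+y)}}{1+y}$ there (the analogue of the paper's split of the Gr\"onwall integral at $r=\alpha$); since your leading term has the extra factor $\tfrac14$, there is slack for this bookkeeping, which you correctly flagged as the main remaining work.
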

\begin{proof}
Note that since the Poisson point process is invariant under independent translations, the imaginary part process $Y_t$ is Markov. In fact, it follows the axiomatic definition of the process \cite[Definition 2.3]{berger2022growth}, that $Y_t$ is a Feller process. Thus, one can write the generator of $Y_t$ as: 
\[
\mathfs{L} f(\zeta)=\int_{-\infty}^{\infty}\left[f(\zeta+\Delta(\zeta,x))-f(\zeta)\right]dx
.\]
We now apply the generator for the function $f(y)=e^{\alpha y}$. Noting that $\max_x\Delta(\zeta,x)=\Delta(\zeta,0)\le\frac{1}{1+\zeta}$,
\bae
\mathfs{L} f(\zeta)\le\int_{-\infty}^{\infty}f'\left(\zeta+\frac{1}{1+\zeta}\right)\Delta(\zeta,x)dx&=f'\left(\zeta+\frac{1}{1+\zeta}\right)\int_{-\infty}^{\infty}\Delta(\zeta,x)dx\\&=\frac{\pi}{2}\alpha e^{\alpha\left(\zeta+\frac{1}{1+\zeta}\right)}
.\eae
By Dynkin's formula

\bae\label{eq:moment_recursion}
\ev[f(Y_t)]&=\ev[f(Y_0)]+\int_0^t\ev[\mathfs{L}f(Y_s)]ds\le \ev[f(Y_0)]+\int_0^t\ev\left[\frac{\pi}{2}\alpha f( Y_s)e^{\frac{\alpha}{1+Y_s}}\right]ds\\
&\le \ev[f(Y_0)]+\frac{\alpha\pi}{2}\int_0^t\ev[ f( Y_s)]\ev\left[e^{\frac{\alpha}{1+Y_s}}\right]ds
,\eae
where in the last inequality we used the negative correlation of $e^{\alpha Y_t}$ and $e^{\frac{\alpha}{1+Y_t}}$ for $\alpha>0$. 
We can bound the second expectation by
\begin{equation}\label{eq:secondexpectation}
\ev\left[e^{\frac{\alpha}{Y_s}}\left(\ind_{H_2(s)}+\ind_{H_2(s)^c}\right)\right]\le e^{\frac{\alpha}{1+\frac{\pi}{2}s}}+\frac{c\alpha}{s^{1+1/6}}.
\end{equation}
Now let $u(t)=\ev\left[f(Y_t)\right]=\ev\left[e^{\alpha Y_t}\right]$, then by \eqref{eq:secondexpectation}

\bae\label{eq:integralineq}
u(t)\le u(0)+\frac{\pi}{2}\alpha\int_0^t u(s) e^{\frac{\alpha}{1+\frac{\pi}{2}s}} ds
.\eae
\begin{lemma}
The integral inequality \eqref{eq:integralineq} admits for all $t>\alpha$,
\bae
u(t)\le c\alpha e^{\alpha} te^{\frac{\pi}{2}\alpha t}
.\eae
\end{lemma}
\begin{proof}
By the integral Gr\"onwall's inequality 
\bae
u(t)&\le e^{\frac{\pi}{2}\alpha \int_0^t  e^{\frac{\alpha}{1+\frac{\pi}{2}r}}dr}\\
&\le e^{\frac{\pi}{2}\alpha \left[\int_0^\alpha  e^{\frac{\alpha}{1+\frac{\pi}{2}r}}dr+ \int_\alpha^t e^{\frac{\alpha}{1+\frac{\pi}{2}r}}dr \right]}\\
&\le e^{\frac{\pi}{2}\alpha^2e^\alpha } e^{\frac{\pi}{2}\alpha t +\alpha^2\log t}\\
&\le e^{\frac{\pi}{2}\alpha^2e^\alpha }  t^{\alpha^2} e^{\frac{\pi}{2}\alpha t }.
\eae
\end{proof}

Next since $M_t(0)=F_t(0)-i\frac{\pi}{2}t$, we obtain for all $\alpha>0$,
\bae
\ev\left[e^{\alpha \ima M_t(0)}\right]=\ev\left[e^{\alpha \ima F_t(0)-\frac{\pi}{2}\alpha t}\right]\le e^{\frac{\pi}{2}\alpha^2e^\alpha }  t^{\alpha^2}.
\eae
\end{proof}

By Chernoff bound,  for any $\alpha>0$, $\prob(\ima M_t(0)>\beta)\le \frac{e^{\frac{\pi}{2}\alpha^2e^\alpha } c t^{\alpha^2}}{e^{\alpha \beta}}$
and in particular $\prob(\ima M_t(0)>\beta\log t)\le e^{\frac{\pi}{2}\alpha^2e^\alpha }  t^{\alpha^2-\alpha \beta}$. Optimizing over $\alpha$, we obtain
\begin{corollary}\label{cor:tail}
$$
\prob(\ima M_t(0)>\beta\log t)\le e^{\frac{\pi \beta^2}{8}e^{\beta/2}}t^{-\frac{\beta^2}{4}}
.$$
\end{corollary}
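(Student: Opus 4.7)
The plan is to derive Corollary \ref{cor:tail} as a direct consequence of Theorem \ref{thm:moment} via the exponential Markov (Chernoff) inequality, followed by a one-parameter optimization over the exponential weight. The paragraph immediately preceding the corollary already lays out the mechanics; what remains is to carry out the optimization explicitly and verify that substitution produces the stated bound.

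First, for any $\alpha > 0$, I would apply Markov's inequality to the nonnegative random variable $e^{\alpha \ima M_t(0)}$:
$$
\prob\bigl(\ima M_t(0) > \beta \log t\bigr) \;=\; \prob\bigl(e^{\alpha \ima M_t(0)} > t^{\alpha \beta}\bigr) \;\le\; t^{-\alpha \beta}\, \ev\!\left[e^{\alpha \ima M_t(0)}\right].
$$
Plugging in the exponential moment bound from Theorem \ref{thm:moment}, namely $\ev[e^{\alpha \ima M_t(0)}] \le e^{\frac{\pi}{2}\alpha^2 e^\alpha}\, t^{\alpha^2}$, I obtain the family of bounds
$$
\prob\bigl(\ima M_t(0) > \beta \log t\bigr) \;\le\; e^{\frac{\pi}{2}\alpha^2 e^\alpha}\, t^{\alpha^2 - \alpha \beta},
$$
valid simultaneously for every $\alpha > 0$.

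The second and final step is to choose $\alpha$ so as to minimize the $t$-dependent factor. For large $t$, the polynomial exponent $\alpha^2 - \alpha \beta$ is the dominant term; as a convex quadratic in $\alpha$ it is minimized at $\alpha^\ast = \beta/2$, where its value is $-\beta^2/4$. Substituting $\alpha = \beta/2$ into the $t$-independent prefactor gives $e^{\frac{\pi}{2}(\beta/2)^2 e^{\beta/2}} = e^{\frac{\pi \beta^2}{8}\, e^{\beta/2}}$, and substituting into the power of $t$ gives $t^{-\beta^2/4}$. Combining these yields exactly
$$
\prob\bigl(\ima M_t(0) > \beta \log t\bigr) \;\le\; e^{\frac{\pi \beta^2}{8}\, e^{\beta/2}}\, t^{-\beta^2/4},
$$
which is the statement of the corollary.

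There is essentially no obstacle: all the analytic work is packaged inside Theorem \ref{thm:moment}, and the choice $\alpha^\ast = \beta/2$ is admissible since Theorem \ref{thm:moment} is stated for all $\alpha > 0$. The only minor point worth noting is that the prefactor $e^{\frac{\pi \beta^2}{8}e^{\beta/2}}$, though potentially very large for large $\beta$, is independent of $t$, so the bound does indeed decay polynomially in $t$ at rate $\beta^2/4$; this is precisely the ingredient that will later allow $\beta$ to be chosen large enough to make the exponent $\beta^2/4$ beat the union bound over a grid of size polynomial in $t$ when proving Theorem \ref{thm:max_fluc}.
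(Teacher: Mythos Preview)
Your proof is correct and follows exactly the paper's approach: apply the Chernoff bound using Theorem \ref{thm:moment} to get $\prob(\ima M_t(0)>\beta\log t)\le e^{\frac{\pi}{2}\alpha^2 e^\alpha}t^{\alpha^2-\alpha\beta}$, then optimize the quadratic exponent by taking $\alpha=\beta/2$. Your explicit verification of the substitution and the remark about the $t$-independence of the prefactor are entirely in line with how the paper uses the corollary downstream.
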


\section{Maximal fluctuations}
To get the maximum over $x\in[0,t]$, we take small intervals and use concentration of the map and it's derivative. To bridge the gaps we use distortion theorems, but for that need to take slightly positive imaginary numbers. 

We will first bound 
\bae
\mathfs{M}_t(\zeta)=\max_{x\in[0,t]}\ima M_t(x+ i \zeta).
\eae

By Corollary \ref{cor:tail}, Theorem \ref{thm:moment} and a union bound we obtain for any $\zeta>0$
\bae\label{eq:flucdiscbound}
\prob\left(\bigcup_{j=1}^{t^{1+\gamma}}\left\{\ima M_t(j t^{-\gamma}+i\zeta)>\beta \log t\right\}\right)\le e^{\frac{\pi \beta^2}{8}e^{\beta/2}}t^{1+\gamma-\frac{\beta^2}{4}}.
\eae
The following lemma is immediate from the proof of \cite[Theorem 5.6]{berger2022growth} with $0$ replaced with a higher point $i\log t$.
\begin{lemma}\label{lem:derivmoments}
There exists a $\xi_{\ref{lem:derivmoments}}>0$ satisfying for large enough $t>0$,
$$
\ev\left[|F_t'(i\log t)|^2\right]\le \xi_{\ref{lem:derivmoments}}
.$$
\end{lemma}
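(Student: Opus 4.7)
The plan is to replicate the proof of \cite[Theorem 5.6]{berger2022growth} line by line, the only change being that the base point is shifted from the real axis to $z_0=i\log t$. The mechanical steps carry over unchanged; the payoff of the shift is that every integral appearing in the argument becomes bounded uniformly in $t$ rather than growing polynomially in it.

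I would begin from the multiplicative representation
\[
F_t'(z_0)=\prod_{t_j\le t}\slit_{x_j}'(F_{t_j^-}(z_0))
\]
(valid in the finite-window approximation \eqref{eq:finitewindow} and preserved in the limit), and apply the Poisson--It\^o formula to $|F_t'(z_0)|^2$. This yields the differential identity
\[
\frac{d}{dt}\ev\bigl[|F_t'(z_0)|^2\bigr]=\ev\!\Big[|F_t'(z_0)|^2\int_{\RR}\!\bigl(|\slit_x'(F_t(z_0))|^2-1\bigr)dx\Big].
\]
The key analytic ingredient, which drives the proof of \cite[Theorem 5.6]{berger2022growth}, is the cubic decay
\[
\Big|\int_{\RR}\bigl(|\slit_x'(z)|^2-1\bigr)dx\Big|\le\frac{c}{(\ima z)^3}\qquad\text{for }\ima z\ge 1,
\]
which follows from the expansion $|\slit_x'(z)|^2-1=\rea(z-x)^{-2}+O(|z-x|^{-4})$ together with $\int_{\RR}\rea(z-x)^{-2}dx=0$ (by the antiderivative $-(z-x)^{-1}$).

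Next I would invoke Lemma \ref{lem:bootstrapfluctuationrate} applied with starting point $z_0$, yielding $\ima F_s(z_0)\ge \log t+\tfrac{\pi s}{2}-\sqrt{s}$ with probability at least $1-e^{-c s^{1/6}}$, and combine it with the deterministic monotonicity $\ima F_s(z_0)\ge\log t$ (a direct consequence of $\ima\slit_x(z)\ge\ima z$, verifiable by expanding $\sqrt{(z-x)^2-1}$). Together these give
\[
\int_0^t\frac{ds}{(\ima F_s(z_0))^3}=O\!\left(\frac{1}{(\log t)^2}\right).
\]
A Gr\"onwall loop on $u(t):=\ev[|F_t'(z_0)|^2]$ then upgrades the differential identity to $u(t)\le u(0)\exp(O((\log t)^{-2}))$, which is the uniform bound claimed.

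The main obstacle is handling the contribution of the bad event $\{\exists s\le t:\ima F_s(z_0)<\log t+\tfrac{\pi s}{4}\}$ inside the Gr\"onwall: a naive Cauchy--Schwarz split calls for a polynomial-in-$t$ upper bound on $\ev[|F_t'(z_0)|^4]$, which is unavailable a priori. The resolution, which is the technical heart of \cite[Theorem 5.6]{berger2022growth}, is to bootstrap a crude $L^p$ bound first: combining the uniform estimate $|\slit_x'(z)|\le C$ available on $\{\ima z\ge 1\}$ (which holds throughout the trajectory by monotonicity) with the Poisson intensity of jumps close to the trajectory and the rapid decay $|\slit_x'(z)-1|=O(|z-x|^{-2})$ for faraway jumps, one obtains an a priori polynomial-in-$t$ bound on $\ev[|F_t'(z_0)|^p]$; the sharper Gr\"onwall argument above then upgrades this to the desired uniform $L^2$ estimate $\xi_{\ref{lem:derivmoments}}$.
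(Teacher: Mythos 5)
Your skeleton --- the jump representation of $F_t'$, the master/Dynkin identity for $\ev[|F_t'|^2]$, the cancellation $\int_{\RR}\rea\big((z-x)^{-2}\big)\,dx=0$ giving $\big|\int_{\RR}(|\slit_x'(z)|^2-1)\,dx\big|\le c\,(\ima z)^{-3}$, the monotonicity $\ima F_s(i\log t)\ge\log t$, the fluctuation bound of Lemma \ref{lem:bootstrapfluctuationrate}, and a Gr\"onwall loop --- is sound, and it is plausibly the content of what the paper invokes (the paper gives no proof beyond citing the proof of \cite[Theorem 5.6]{berger2022growth} with base point $i\log t$, so a line-by-line comparison is not possible). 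The genuine gap sits exactly where you place the ``technical heart'': the claimed a priori \emph{polynomial-in-$t$} bound on $\ev[|F_t'(i\log t)|^p]$ does not follow from $|\slit_x'|\le C$ on $\{\ima z\ge1\}$ plus Poisson jump counting plus the decay $|\slit_x'(z)-1|=O(|z-x|^{-2})$. That route discards the sign cancellation in $\rea\big((z-x)^{-2}\big)$ and only gives $\log|F_t'(i\log t)|\lesssim\sum_j\min\{(\log t)^{-2},|x_j-\rea F_{t_j^-}|^{-2}\}$, a Poisson functional whose mean is of order $t/\log t$; its exponential moments are of order $e^{ct/\log t}$, not $t^{C}$. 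Against a bad event whose probability is only stretched-exponentially small ($e^{-c s^{1/6}}$ from Lemma \ref{lem:bootstrapfluctuationrate}), Cauchy--Schwarz with an $e^{ct/\log t}$ moment bound diverges, so the Gr\"onwall argument does not close as written.

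A clean repair replaces jump counting by a pathwise distortion bound: applying the Koebe one-quarter theorem on the disc $B(i\log t,\log t)\subseteq\BH$ gives $\mathrm{dist}\big(F_s(i\log t),\partial F_s(\BH)\big)\ge\tfrac14|F_s'(i\log t)|\log t$, while the vertical segment from $F_s(i\log t)$ down to $\RR$ shows this distance is at most $\ima F_s(i\log t)$; hence deterministically $|F_s'(i\log t)|\le 4\,\ima F_s(i\log t)/\log t$. Then $\ev\big[|F_s'(i\log t)|^2\ind_{B_s}\big]\le\frac{16}{(\log t)^2}\,\ev\big[(\ima F_s(i\log t))^4\big]^{1/2}\prob(B_s)^{1/2}$, and since $\ima F_s(i\log t)=\log t+\tfrac\pi2 s+\ima M_s(i\log t)$ has polynomially bounded moments (e.g.\ via Theorem \ref{thm:moment}), the bad-event contribution to the Gr\"onwall inequality is summable in $s$, and the good-event contribution is $O((\log t)^{-2})$ exactly as you computed, yielding the uniform constant $\xi_{\ref{lem:derivmoments}}$. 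Two minor points that do not affect the conclusion: Lemma \ref{lem:bootstrapfluctuationrate} as stated gives $\ima F_s(z)\ge\tfrac\pi2 s-\sqrt s$ w.h.p.\ \emph{without} the additive $\ima z$ (combine with monotonicity to get the maximum of $\log t$ and $\tfrac\pi2 s-\sqrt s$, which suffices for the $O((\log t)^{-2})$ integral), and the antiderivative of $(z-x)^{-2}$ in $x$ is $+(z-x)^{-1}$, not $-(z-x)^{-1}$.
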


By Lemma \ref{lem:derivmoments} it is immediate to get concentration bound for the derivative:
\begin{corollary}\label{cor:derbounds}
There is a $c>0$ such that for any $\gamma>6$, 
$$\prob\left(\bigcup_{j=1}^{t^{1+\gamma}}|F_t'(jt^{-\gamma}+i\log t)|>t^{2\gamma/3}\right)<\frac{\xi_{\ref{lem:derivmoments}}}{t}.$$
\end{corollary}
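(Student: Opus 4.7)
The plan is to derive Corollary \ref{cor:derbounds} from Lemma \ref{lem:derivmoments} by a standard second-moment plus union bound, after observing that the driving Poisson point process $\ppp$ is translation invariant in the real direction. Concretely, for any fixed $x\in\BR$, the map $z\mapsto F_t(z+x)-x$ has the same law as $F_t(z)$, since shifting $\ppp$ horizontally by $x$ is a measure-preserving transformation that exactly replaces $F_t(\cdot)$ by $F_t(\cdot+x)-x$. Differentiating in $z$ preserves this identity in law, so $F_t'(x+i\log t)\eqd F_t'(i\log t)$ for every $x\in\BR$, and in particular $\ev[|F_t'(jt^{-\gamma}+i\log t)|^2]\le \xi_{\ref{lem:derivmoments}}$ uniformly in $j$.

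Next I would apply Markov's inequality to each point on the discrete grid: for each $1\le j\le t^{1+\gamma}$,
\[
\prob\!\left(|F_t'(jt^{-\gamma}+i\log t)|>t^{2\gamma/3}\right)=\prob\!\left(|F_t'(jt^{-\gamma}+i\log t)|^2>t^{4\gamma/3}\right)\le \frac{\xi_{\ref{lem:derivmoments}}}{t^{4\gamma/3}}.
\]
A union bound over the $t^{1+\gamma}$ grid points then yields
\[
\prob\!\left(\bigcup_{j=1}^{t^{1+\gamma}}\{|F_t'(jt^{-\gamma}+i\log t)|>t^{2\gamma/3}\}\right)\le \xi_{\ref{lem:derivmoments}}\, t^{1+\gamma-\frac{4\gamma}{3}}=\xi_{\ref{lem:derivmoments}}\, t^{1-\gamma/3}.
\]
For $\gamma>6$ the exponent $1-\gamma/3$ is at most $-1$, which gives the claimed bound (possibly with a slightly adjusted constant that can be absorbed into $\xi_{\ref{lem:derivmoments}}$).

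There is no real obstacle here; the only point worth double-checking is the translation invariance step, since it is what allows the single-point estimate of Lemma \ref{lem:derivmoments} to be used uniformly across the grid. It would be prudent to note explicitly that the invariance is a property of the underlying Poisson intensity (Lebesgue measure on $\BR\times\BR_+$) and hence of the limit field $F_t$, so that no additional uniform moment bound on $F_t'$ in the spatial variable is needed. The choice of exponent $2\gamma/3$ in the threshold, together with the width $t^{-\gamma}$ of the grid and the total range $[0,t]$, is exactly what is needed to close the budget $t^{1+\gamma}\cdot t^{-4\gamma/3}\le t^{-1}$, and this is what forces the condition $\gamma>6$ in the statement.
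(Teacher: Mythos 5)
Your proposal is correct and follows essentially the same route as the paper: Markov's inequality applied to the second moment bound of Lemma \ref{lem:derivmoments} at each grid point, followed by a union bound over the $t^{1+\gamma}$ points, with $\gamma>6$ closing the exponent count $1+\gamma-\tfrac{4\gamma}{3}<-1$. Your explicit justification of the spatial translation invariance (stationarity of the driving Poisson process), which the paper leaves implicit, is a welcome clarification but not a different argument.
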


\begin{proof}
First by Lemma \ref{lem:derivmoments} and Markov's inequality,
$\prob\left(|F_t'(i\log t)|^2>{\kappa}\right)\le \frac{\xi_{\ref{lem:derivmoments}}}{\kappa}$. Thus for $\kappa=t^{2\gamma/3}$,
\bae
\prob\left(\bigcup_{j=1}^{t^{1+\gamma}}|F_t'(jt^{-\gamma}+i\log t)|>t^{2\gamma/3}\right)<\frac{\xi_{\ref{lem:derivmoments}} t^{1+\gamma}}{t^{4\gamma/3}}=\frac{\xi_{\ref{lem:derivmoments}}}{t^{\gamma/3-1}}.\eae
\end{proof}

We will make use of the following conformal distortion theorem, whose proof can be found in \cite[Proof of Theorem 4.3]{berger2022growth}:
\begin{theorem}[Koebe 1/4 theorem for half plane]\label{thm:koebe}
Let $f:\BH\to\BC$ be conformal, then for any $z\in\BH$ and any $w\in B(z,\ima z/2)$, 
$$
|f'(w)|\le 16 |f'(z)|
.$$
\end{theorem}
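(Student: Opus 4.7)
The plan is to reduce the statement to the classical Koebe distortion theorem on the unit disk via a single affine change of variables. The key geometric observation is that the open disk $B(z, \ima z)$ is contained in $\BH$, since for any $w$ in this disk one has $\ima w \ge \ima z - |w-z| > 0$. Therefore $f$ is conformal on $B(z, \ima z)$, and in particular univalent there.

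Next, I would pull back to the unit disk via the affine bijection $\zeta \mapsto z + \ima z \cdot \zeta$: define $g(\zeta) := f(z + \ima z \cdot \zeta)$, which is univalent on $\BD$. Normalizing,
$$h(\zeta) := \frac{g(\zeta) - g(0)}{g'(0)} = \frac{f(z + \ima z \cdot \zeta) - f(z)}{\ima z \cdot f'(z)}$$
is a univalent map on $\BD$ with $h(0)=0$ and $h'(0)=1$, so it lies in the Koebe class $S$.

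The classical Koebe distortion theorem then yields
$$|h'(\zeta)| \le \frac{1 + |\zeta|}{(1-|\zeta|)^3}, \qquad \zeta \in \BD.$$
For any $w \in B(z, \ima z /2)$, write $w = z + \ima z \cdot \zeta$ with $|\zeta| \le 1/2$. Applying the chain rule twice and substituting,
$$|f'(w)| \;=\; \frac{|g'(\zeta)|}{\ima z} \;=\; |f'(z)| \cdot |h'(\zeta)| \;\le\; |f'(z)| \cdot \frac{3/2}{(1/2)^3} \;=\; 12\, |f'(z)| \;\le\; 16 \,|f'(z)|,$$
which is the desired bound.

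Since the argument is essentially a rescaling of the classical disk statement, there is no serious obstacle. The only points requiring care are the geometric containment $B(z,\ima z) \subset \BH$ (so that univalence of $f$ on this ball — and hence of $g$ on $\BD$ — is available to invoke Koebe), and tracking the affine factor $\ima z$ through the chain rule. The constant $16$ in the statement comfortably absorbs the sharper value $12$ produced by the Koebe bound at radius $1/2$.
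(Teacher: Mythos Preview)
Your argument is correct: the affine pullback to the unit disk and the classical Koebe distortion bound at radius $1/2$ give exactly the constant $12\le 16$, and the containment $B(z,\ima z)\subset\BH$ justifies univalence of $g$. The paper does not actually supply a proof of this statement---it simply cites \cite[Proof of Theorem~4.3]{berger2022growth}---so there is no in-paper argument to compare against; your reduction is the standard one and is presumably what the cited reference carries out.
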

Next we present a uniform bound for points on a longer line of height $\log t$:
\begin{lemma}\label{lem:boundatlogheight} There exists a $\beta>0$ and $c>0$ such that
\bae\label{eq:log_height}
\prob\left(\max_{x\in[-t,2t]}\ima M_t(x+i\log t)>\beta \log t\right)<\frac{c}{t}.
\eae
\end{lemma}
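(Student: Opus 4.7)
The plan is to combine the discrete union bound \eqref{eq:flucdiscbound} (specialized to $\zeta=\log t$) with the derivative concentration of Corollary \ref{cor:derbounds} and the half-plane Koebe estimate (Theorem \ref{thm:koebe}) via a fine discretization of $[-t,2t]$. Since Theorem \ref{thm:moment} is uniform in $z\in\BH$, the same tail bound as in Corollary \ref{cor:tail} applies at every $x_j+i\log t$, so a union bound over mesh points $x_j=jt^{-\gamma}$ with $|j|\lesssim t^{1+\gamma}$ covering $[-t,2t]$ yields
$$\prob\Bigl(\max_j\ima M_t(x_j+i\log t)>\beta\log t\Bigr)\le C(\beta)\, t^{1+\gamma-\beta^2/4}.$$
In parallel, Corollary \ref{cor:derbounds} (applied at base point $i\log t$, using the translation invariance in the real direction) gives $|F_t'(x_j+i\log t)|\le t^{2\gamma/3}$ for all $j$ with failure probability at most $\xi_{\ref{lem:derivmoments}}/t^{\gamma/3-1}$.

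To bridge the gap between consecutive mesh points I would invoke Theorem \ref{thm:koebe}: at height $\ima z=\log t$, the Koebe ball $B(x_j+i\log t,\log t/2)$ has radius $\log t/2$, which is asymptotically much larger than the mesh spacing $t^{-\gamma}$. Hence the horizontal segment to the next mesh point lies inside the Koebe ball, and one obtains $|F_t'(w)|\le 16 t^{2\gamma/3}$ along it. Integrating,
$$|F_t(x+i\log t)-F_t(x_j+i\log t)|\le 16\, t^{-\gamma/3}\qquad\text{for }|x-x_j|\le t^{-\gamma}.$$
Since $M_t(z)=F_t(z)-z-i\pi t/2$, the corresponding difference of $\ima M_t$ is controlled by the same bound, up to an additive $t^{-\gamma}$ that is negligible.

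Putting the two good events together, on their intersection one has $\max_{x\in[-t,2t]}\ima M_t(x+i\log t)\le\beta\log t+O(t^{-\gamma/3})$. The combined failure probability is at most $C(\beta)t^{1+\gamma-\beta^2/4}+\xi_{\ref{lem:derivmoments}}/t^{\gamma/3-1}$. Choosing $\gamma$ large enough (say $\gamma=7$) to make $\gamma/3-1\ge 1$, and then $\beta$ large enough that $\beta^2/4\ge 2+\gamma$, both terms are $O(1/t)$, proving the lemma.

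The only delicate point is the interplay between the mesh resolution and the Koebe radius: one needs the evaluation height to dominate the mesh spacing so that the distortion theorem upgrades a finite collection of pointwise derivative bounds into a uniform one. This is precisely why the lemma is stated at height $\log t$ rather than on the real line — on $\BR$ the Koebe ball would degenerate and the bridging step would fail.
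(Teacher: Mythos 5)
Your argument is correct and follows essentially the same route as the paper: discretize $[-t,2t]$ with mesh $t^{-\gamma}$, apply the uniform tail bound of Corollary \ref{cor:tail} and the derivative bound of Corollary \ref{cor:derbounds} at the mesh points via union bounds, bridge the gaps with the half-plane Koebe estimate of Theorem \ref{thm:koebe}, and then choose $\gamma>6$ and $\beta$ large. Your version is in fact marginally cleaner in that you apply Koebe to the conformal map $F_t$ itself (handling the subtracted identity separately) rather than to $M_t$ directly.
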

\begin{proof}
Note that if $\{\ima M_t(j t^{-\gamma}+i\log t)\le\beta \log t\}$ and $\{|F_t'(jt^{-\gamma}+i\log t)|\le t^{2\gamma/3}\}$, then for every $x\in[j t^{-\gamma},(j+1) t^{-\gamma})$, by Theorem \ref{thm:koebe}
\bae\label{eq:koebelogheight}
|M_t'(x+i\log t)|\le 16 |M_t'(jt^{-\gamma}+i\log t)|,
\eae
and thus,
\bae
\ima M_t(x+i\log t)&\le \ima M_t(j t^{-\gamma}+i\log t)+\int_{jt^{-\gamma}}^x|M_t'(y+i\log t)|dy\\
&\le\ima M_t(j t^{-\gamma}+i\log t)+16t^{-\gamma}|M_t'(jt^{-\gamma}+i\log t)|\\
&\le 2\beta \log t.
\eae  
Since 
\bae
\left\{\max_{x\in[0,t]}\ima M_t(x+i\log t)\le 2\beta \log t\right\}&\supset \bigcap_{j=1}^{t^{1+\gamma}}\left\{\ima M_t(j t^{-\gamma}+i\zeta)\le\beta \log t\right\}\\
&\cap \bigcap_{j=1}^{t^{1+\gamma}}\left\{|F_t'(jt^{-\gamma}+i\log t)|\le t^{\gamma}\right\}
,\eae
 by \eqref{eq:flucdiscbound} and Corollary \ref{cor:derbounds},
 \bae
\prob\left(\max_{x\in[-t,2t]}\ima M_t(x+i\log t)>2\beta \log t\right)<e^{\frac{\pi \beta^2}{8}e^{\beta/2}}t^{1+\gamma-\frac{\beta^2}{4}}+\frac{c}{t}.
\eae
Now the claim follows by choosing $\gamma>6$ and $\beta$ large enough. 
\end{proof}
We will also need a uniform bound on the real fluctuations:
\begin{lemma}
There exists a $c>0$ such that,
\bae\label{eq:real_log}
\prob\left(\max_{x\in[-t,2t]}|\rea M_t(x+i\log t)|> t^{2/3} \right)<\frac{c \log t}{t^{1/6}}.
\eae
\end{lemma}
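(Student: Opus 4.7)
The plan parallels the proof of Lemma \ref{lem:boundatlogheight}, but since the exponential-moment bound (Corollary \ref{cor:tail}) applies specifically to $\ima M_t$, for the real part I will use Chebyshev from the second-moment estimate in Theorem \ref{thm:var}. This forces a smaller grid spacing, and it is precisely this mismatch that yields the weaker rate $c\log t/t^{1/6}$, in contrast to the rate $c/t$ in Lemma \ref{lem:boundatlogheight}.

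First, by translation invariance of $\ppp$ in the real direction, $M_t(x+i\log t)\eqd M_t(i\log t)$ for every $x\in\BR$, and the variance calculation of Theorem \ref{thm:var} applied at initial point $i\log t$ (whose positive imaginary part only decreases the integrand $\tfrac{c}{1+\ima F_s(\cdot)}$ used in the martingale bracket) gives $\ev[|\rea M_t(x+i\log t)|^2]\le c\log t$ uniformly in $x$. Placing a grid of spacing $h=t^{-1/6}$ in $[-t,2t]$, which contains at most $3t^{7/6}$ points, Chebyshev and a union bound yield
$$\prob\left(\exists\, j:|\rea M_t(jh+i\log t)|>t^{2/3}/2\right)\le \frac{12c\log t}{t^{1/6}}.$$

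To interpolate between adjacent grid points, I use Lemma \ref{lem:derivmoments} with Markov to obtain $\prob(|F_t'(i\log t)|>t^{5/6}/32)\le 32^2\xi_{\ref{lem:derivmoments}}/t^{5/3}$, and a union bound over the grid gives this event on all grid points with probability $O(t^{-1/2})$, which is dominated by the target rate. On the complementary event, Theorem \ref{thm:koebe} applied in $B(jh+i\log t,(\log t)/2)$ (which contains the adjacent grid point since $h\ll \log t$ for $t$ large) gives $|F_t'(w)|\le 16\cdot t^{5/6}/32=t^{5/6}/2$, hence $|M_t'(w)|\le t^{5/6}/2+1$, for every $w$ on the horizontal segment between consecutive grid points. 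Integrating along this segment,
$$|\rea M_t(x+i\log t)-\rea M_t(jh+i\log t)|\le h\left(t^{5/6}/2+1\right)\le t^{2/3}/2$$
for $t$ large, and combining with the grid bound completes the proof. The main delicate point is the tension between the Chebyshev union bound (which prefers a coarse grid) and the Koebe interpolation (which requires a fine one); the exponent $\gamma=1/6$ is the unique choice where the two contributions simultaneously saturate the desired bound $c\log t/t^{1/6}$.
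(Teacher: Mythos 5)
Your argument is correct and takes essentially the same route as the paper: a grid of spacing $t^{-1/6}$ on $[-t,2t]$ controlled by Chebyshev via Theorem \ref{thm:var}, together with Markov applied to Lemma \ref{lem:derivmoments} and Koebe distortion (Theorem \ref{thm:koebe}) to interpolate between grid points — the paper does exactly this, merely leaving the distortion step implicit. The only blemish is a harmless constant: $t^{-1/6}(t^{5/6}/2+1)=t^{2/3}/2+t^{-1/6}$ slightly exceeds $t^{2/3}/2$, which is fixed by shrinking the thresholds by a constant factor.
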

\begin{proof}
By Theorem \ref{thm:var}, for any ${\tilde{\gamma}}<1/6$
\bae
\prob\left(\bigcup_{j=1}^{t^{1+{\tilde{\gamma}}}}|\rea M_t(jt^{-{\tilde{\gamma}}}+i\log t)|>t^{2/3}\right)<\frac{\log t \cdot t^{1+{\tilde{\gamma}}}}{t^{4/3}}=\frac{\log t }{t^{1/3-{\tilde{\gamma}}}}\le\frac{c \log t}{t^{1/6}}.
\eae
Moreover,
\bae
\prob\left(\bigcup_{j=1}^{t^{1+1/6}}|F_t'(jt^{-\gamma}+i\log t)|>t^{5/6}\right)<\frac{\xi_{\ref{lem:derivmoments}} t^{1+1/6}}{t^{10/6}}=\frac{\xi_{\ref{lem:derivmoments}}}{t^{1/2}}.
\eae

Note that if $\{|\rea M_t(j t^{-{\tilde{\gamma}}}+i\log t)|\le t^{\frac{2}{3}}\}$ and $\{|F_t'(jt^{-{\tilde{\gamma}}}+i\log t)|\le t^{5/6}\}$, then for every $x\in[j t^{-{\tilde{\gamma}}},(j+1) t^{-{\tilde{\gamma}}})$,
\bae
|\rea M_t(x+i\log t)|\le |\rea M_t(j t^{-{\tilde{\gamma}}}+i\log t)+t^{-1/6}|F_t'(jt^{-{\tilde{\gamma}}}+i\log t)|\le t^{2/3}.
\eae 

\end{proof}


\begin{proof}[Proof of Theorem \ref{thm:max_fluc}]
We use planarity to show that $F_t([0,t])$ can't overpass $F_t(i\log t+[-t,2t])$. 

Using the notations of \cite{berger2022growth}, let $\nu_{F_{2t}(\frac{3}{2}t)}^{2t}:[0,1]\to F_t(\partial\BH)$ be the unique path connecting $F_{2t}\left(\frac{3}{2}t\right)$ to $\partial \BH$ in $F_{2t}(\partial \BH)$. Similarly define  $\nu_{F_{2t}(-\frac{1}{2}t)}^{2t}$. By \cite[Theorems 7.2+7.7]{berger2022growth} and \cite[Proposition 2.4.5]{lawler2010random},
\bae\label{eq:pathdown}
&\prob\left(\left\{F_t^{-1}\left(\nu_{F_{2t}(\frac{3}{2}t)}^{2t}\right)\cap [0,t]=\emptyset\right\}\cap\left\{\ima F_{2t}(\frac{3}{2}t)>\frac{\pi}{2}t+3\beta \log t\right\}\right)&>1- c e^{-c t^{1/6}},\\
&\prob\left(\left\{F_t^{-1}\left(\nu_{F_{2t}(-\frac{1}{2}t)}^{2t}\right)\cap [0,t]=\emptyset\right\}\cap\left\{\ima F_{2t}(-\frac{1}{2}t)>\frac{\pi}{2}t+3\beta \log t\right\}\right)&>1- c e^{-c t^{1/6}}.
\eae
Denote $\mathfs{V}$ the intersection of the events in \eqref{eq:log_height}, \eqref{eq:real_log} and \eqref{eq:pathdown}. Let 
\bae 
\tilde{t}_l=\inf\left\{s\in[0,1]:\nu_{F_{2t}(-\frac{1}{2}t)}^{2t}(s)\in F_t(i\log t+[-t,2t])\right\},\\
\tilde{t}_r=\inf\left\{s\in[0,1]:\nu_{F_{2t}(\frac{3}{2}t)}^{2t}(s)\in F_t(i\log t+[-t,2t])\right\},
\eae
Under the event $\mathfs{V}$ we have that $F_t([0,t])$ is contained in the area bounded by 
\bae\label{eq:bounding set}
\nu_{F_{2t}(-\frac{1}{2}t)}^{2t}([0,\tilde{t}_l]) \cup F_t(i\log t+[-t,2t])\cup \nu_{F_{2t}\left(\frac{3}{2}t\right)}^{2t}([0,\tilde{t}_r]),
\eae
whose imaginary part is bounded by $\frac{\pi}{2}t+2\beta \log t$, under the same event (See Figure \ref{figure:4}).

\begin{figure}

\begin{tikzpicture}
    \draw[thick] (-2, 0) -- (4, 0);
    
    \foreach \x/\label in {-2/$-t$, 0/$0$, 2/$t$, 4/$2t$} {
        \draw[thick] (\x, 0.1) -- (\x, -0.1); 
        \node[below] at (\x, 0) {\label}; 
    }
    
    \draw[thick, blue] (-2, 0.5) -- (4, 0.5);
    \node[right, blue] at (4, 0.5) {{\footnotesize $i \log t$}};
    
    \draw[thick, red] (-2, 3.5) -- (4, 3.5);
    \node[right, red] at (4, 3.5) {{\footnotesize $i(\frac{\pi}{2}t+2\beta \log t)+\BR$}};
    
    \draw[thick, blue] (-2, 3) -- (-1.5,3.3)-- (0,2.9)-- (2,3.4)--(4, 3);
    \node[right, blue] at (4, 3) {{ \footnotesize $F_t(i\log t+[-t,2t])$}};
    
    

    \coordinate (root) at (-1, 0); 
    \draw[thick, green!70!black] (root) -- ++(0.5, 1.5); 
    
    \draw[thick, green!70!black] (-0.5, 1.5) -- ++(-0.5, 1.0);


    \node[above, green!70!black] at (4, 1.5) { {\footnotesize $\nu_{F_t(\frac{3}{2}t)}^t$}};
    
     \coordinate (root) at (3, 0); 
    \draw[thick, green!70!black] (root) -- ++(0.5, 1.5); 
    
    \draw[thick, green!70!black] (root) -- ++(0.5, 1.5)--++(-0.5, 1.25)--++(-0.05,1.7);

    \draw[thick, green!70!black] (-1.0, 2.5) -- ++(0.25, 0.25)--++(0.3,2);

    \node[above, green!70!black] at (-1.5, 1.5) {{\footnotesize $\nu_{F_t(-\frac{1}{2}t)}^t$}};
     \node[above, black!70!black] at (2.3, 1.5) {{\footnotesize $F_t([0,t])$}};

   \node[right, blue] at (3, 3.3) {$z_r$};
   \node[draw, fill=blue, circle, inner sep=0pt, minimum size=3pt] at (3,3.2) {};
   
      \node[right, blue] at (-1, 3.3) {$z_l$};
   \node[draw, fill=blue, circle, inner sep=0pt, minimum size=3pt] at (-0.7,3.1) {};
   

     

    \coordinate (root) at (0.5, 0); 
    \draw[thick, black!70!black] (root) -- ++(0.5, 1); 
    \draw[thick, black!70!black] (root) -- ++(-0.55, 1.1); 
    
    \draw[thick, black!70!black] (0, 1)-- ++(0.25, 0.5);
    \draw[thick, black!70!black] (1, 1) -- ++(0.5, 1.0);
        \draw[thick, black!70!black] (1, 1) -- ++(-0.5, 0.7);

    \draw[thick, black!70!black] (0.5, 1.7) -- ++(-0.25, 0.5);
    \draw[thick, black!70!black] (0.5, 1.7) -- ++(0.25, 0.6);

        \coordinate (root) at (1.5, 0); 
    \draw[thick, black!70!black] (root) -- ++(0.8, 1); 
    \draw[thick, black!70!black] (root) -- ++(-0.35, 0.7); 

    \draw[thick, black!70!black] (2.3, 1) -- ++(-0.25, 0.5);
    \draw[thick, black!70!black] (2.3, 1)  -- ++(0.25, 0.6);

\end{tikzpicture}
\caption{Illustration of bounding set \eqref{eq:bounding set}, with $z_l=\nu_{F_{2t}(-\frac{1}{2}t)}^{2t}(\tilde{t}_l)$ and $z_r=\nu_{F_{2t}(\frac{3}{2}t)}^{2t}(\tilde{t}_r)$.  \label{figure:4}}
\end{figure}
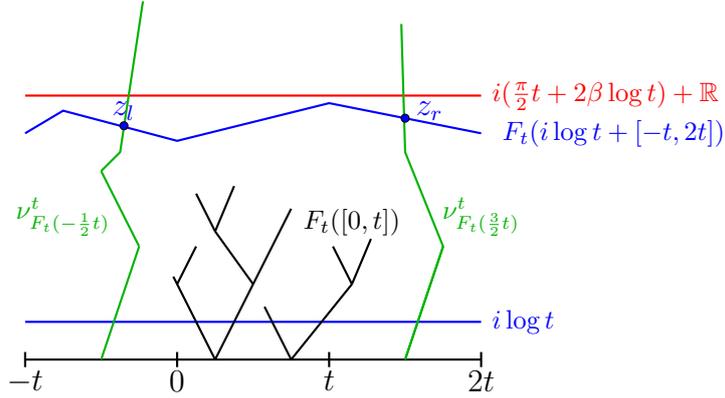

\end{proof}
\appendix
\section{New Lemma 3.3}
\begin{lemma}\label{lem:slit_int_bound}
There exists a $c_{\ref{lem:slit_int_bound}}>0$ and a $\tilde{y}>0$, such that for any $y>\tilde{y}$
\bae
\Big|\int_{-\infty}^{\infty}|\slit_x(iy)-iy|^2dx-\min\{1,\frac{\pi}{4y}\}\Big|<\frac{c_{\ref{lem:slit_int_bound}}}{y^3}.
\eae 
\end{lemma}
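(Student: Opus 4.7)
The plan is to obtain the estimate by a uniform Laurent expansion of $\slit_x(iy)-iy$ in negative powers of $u := iy-x$, and then integrate term by term. Since the bound is only needed for $y > \tilde{y}$ with $\tilde{y}$ large, we may assume $\min\{1, \pi/(4y)\} = \pi/(4y)$ and $|u|^2 = x^2+y^2 \geq y^2$ is uniformly large.

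First I would expand $\sqrt{u^2-1}$. The branch convention ($\sqrt{\cdot}$ with positive imaginary part, equivalently $\slit_x(z) \sim z$ at infinity) forces $\sqrt{u^2-1} \sim u$ as $|u| \to \infty$ throughout the upper half plane, so for $|u| > 2$ (guaranteed by $\tilde{y} > 2$) the binomial series gives, uniformly in $x$,
\[
\sqrt{u^2-1} \;=\; u - \frac{1}{2u} - \frac{1}{8u^3} + O(|u|^{-5}),
\]
and hence $\slit_x(iy) - iy = -\frac{1}{2u} - \frac{1}{8u^3} + O(|u|^{-5})$. Squaring and using $|u|^2 = x^2+y^2$ together with $\rea(u^{-2}) = (x^2-y^2)/(x^2+y^2)^2$,
\[
|\slit_x(iy)-iy|^2 \;=\; \frac{1}{4(x^2+y^2)} + \frac{x^2-y^2}{8(x^2+y^2)^3} + O\!\left(\frac{1}{(x^2+y^2)^3}\right).
\]

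Next I would compute the three resulting integrals. Using the standard identity $\int_{\BR}(x^2+y^2)^{-n}\,dx = \frac{\pi (2n-2)!}{4^{n-1}((n-1)!)^2 y^{2n-1}}$ (obtainable by residues or the substitution $x = y\tan\theta$),
\[
\int_{-\infty}^\infty \frac{dx}{4(x^2+y^2)} = \frac{\pi}{4y}, \qquad \int_{-\infty}^\infty \frac{x^2-y^2}{8(x^2+y^2)^3}\,dx = -\frac{\pi}{32\,y^3},
\]
while the error term contributes at most $O(1/y^5)$. Combining these,
\[
\int_{-\infty}^\infty |\slit_x(iy)-iy|^2\,dx - \frac{\pi}{4y} \;=\; -\frac{\pi}{32\,y^3} + O(y^{-5}),
\]
whose absolute value is bounded by $c_{\ref{lem:slit_int_bound}}/y^3$ for an appropriate constant, once $y$ exceeds a suitable threshold $\tilde{y}$.

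There is no substantial analytic obstacle. The only point requiring care is the uniformity of the Laurent expansion in $x \in \BR$, which is automatic from the geometric bound $|u^{-2}| \leq 1/y^2$ since $|u| \geq y$. The branch selection of $\sqrt{u^2-1}$ on the upper half plane is fixed by analytic continuation from $u = iy$, where direct computation gives $\sqrt{u^2-1} = i\sqrt{y^2+1} = u + O(1/y)$, confirming this is the branch compatible with the Laurent expansion. Everything else is elementary quadrature.
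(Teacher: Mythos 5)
Your proof is correct and follows essentially the same route as the paper's: a uniform Laurent/Taylor expansion of $\sqrt{(iy-x)^2-1}-(iy-x)$ in powers of $(iy-x)^{-1}$, followed by term-by-term integration in $x$. You carry the expansion one order further than necessary (the paper stops at an $O((x^2+y^2)^{-2})$ error, which already integrates to $O(y^{-3})$), but this only sharpens the estimate and changes nothing essential.
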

\begin{proof}
Using Taylor expansion of $\sqrt{\cdot}$ at $y\to\infty$
\bae
\Big|\slit_x(iy)-iy\Big|^2&=\Big|\sqrt{(iy-x)^2-1}-(iy-x)\Big|^2=\Big|\frac{1}{2(iy-x)}+O\left(\frac{1}{(iy-x)^3}\right)\Big|^2\\
&=\frac{1}{4(x^2+y^2)}+O\left(\frac{1}{(x^2+y^2)^2}\right)
.\eae
Thus,
\bae
\int_{-\infty}^{\infty}|\slit_x(iy)-iy|^2dx=\min\{1,\frac{\pi}{4y}\}+O\left(\frac{1}{y^3}\right)
\eae
\end{proof}

\section*{Acknoledgements}
This work is partially supported by DFG grant 5010383. The authors would like to thank Ofer Zeitouni for fruitful discussions. 

	\bibliography{career}
	\bibliographystyle{alpha}

\end{document}